\sodef\so{}{.14em}{.4em plus.1em minus .1em}{.4em plus.1em minus .1em} 
\newcommand{\nref}[1]{\hyperref[#1]{\ref*{#1}}}
\newcommand{\subtile}[1] 
{
	\vspace{-0.3cm}
	\begin{center}
 		{{\textsc{#1}}}\\
	\end{center}
	\vspace{0.1cm}
}
\newcommand{\B}{\mathcal{B}} 	
\newcommand{\g}{\mathfrak{g}} 	
\newcommand{\Z}{\mathbb{Z}}  	
\newcommand{\R}{\mathbb{R}}  	
\newcommand{\C}{\mathbb{C}}  	
\renewcommand{\k}{\Bbbk}  	
\newcommand{\F}{\mathbb{F}}  	
\newcommand{\D}{\mathbb{D}}  	
\newcommand{\Q}{\mathbb{Q}}  	
\renewcommand{\S}{\mathbb{S}}  	
\newcommand{\RR}{\mathcal{R}} 
\newcommand{\GG}{\mathcal{G}} 
\newcommand{\DD}{\mathcal{D}} 
\newcommand{\WW}{\mathcal{W}} 
\theoremstyle{plain}
\newtheorem{theorem}{Theorem}[section]
\newtheorem*{theoremX}{Theorem}
\newtheorem{corollary}[theorem]{Corollary}
\newtheorem*{corollaryX}{Corollary}
\newtheorem{definition}[theorem]{Definition}
\newtheorem{example}[theorem]{Example}
\newtheorem{lemma}[theorem]{Lemma}
\newtheorem*{lemmaX}{Lemma}
\newtheorem{remark}[theorem]{Remark}
\newcommand{\hamburger}[1] 
{
  \pagestyle{empty}
  \vspace*{-3cm}
  \begin{center}
    \Large \bf
    #1
  \end{center}
  \vspace{0.5cm}
  \begin{center}	
    Simon Lentner \\
    Algebra And Number Theory (AZ), University Hamburg \\
    Bundesstraße 55, D-20146 Hamburg 
  \end{center}
  \vspace{-0.0cm}

}
\begin{document}

\hamburger{Root Systems In Finite Symplectic Vector Spaces}
\pagestyle{empty}
\begin{abstract}
  We study realizations of root systems in possibly degenerate
  symplectic vector spaces over finite fields, up to symplectic isomorphisms.
  The main result
  of this paper is the classification of such realizations for the field
  $\F_2$. Thereby, each root system requires a specific degree of degeneracy of
  the symplectic vector space. Our main  motivation for this paper is, that for
  each such realization of a root system we can construct a Nichols algebra
  over a nonabelian group.
\end{abstract}
  \makeatletter
  \@setabstract
  \makeatother

  \tableofcontents
  \newpage

\section{Introduction}
\subsection{Motivation and applications}

For a given $n\times n$ Cartan matrix with entries in $\Z$, a
root system of rank $n$ is generated by a basis of a $n$-dimensional euclidean
vector
space $V=\C^n$ (the simple roots) with the scalar products between all basis
elements prescribed by the Cartan matrix. The Cartan matrix is usually
visualized by a generalized Dynkin diagram. One demands
stability of the set of all roots under the action of the
Weyl/Coxeter group associated to the Cartan matrix. Root systems play among
others a prominent role in the theory of Lie algebras as well as Nichols
algebras, which appear naturally as Borel parts of 
quantum groups \cite{AS10}, such as $u_q(\g)$.\\

In our recent study of Nichols algebras over certain nonabelian groups $G$ of
nilpotency class $2$ in \cite{Len13} we started with a root system over $\C$
with given Cartan matrix for a Nichols algebra over an abelian group $G/[G,G]$.
Then, this Nichols algebra was extended to $G$ using an additional root system
structure on a symplectic vector space $V=G/G^2$ over the finite field
$\F_2$ with the same Cartan matrix. In this application, the symplectic form is
induced by the commutator map of $G$ and the prescribed Dynkin diagram is thus a
$G$-decorated commutativity graph. 


In the following article, we shall present a definition and classification of
\emph{symplectic root system} over the field $\F_2$. As every Cartan matrix can
only have entries $0,1\in\F_2$, it is sufficient to consider simply-laced Dynkin
diagrams and hence ordinary graphs. A symplectic root system over $\F_2$ is then
defined as a decoration of the Dynkin diagram graph by simple roots,
which are vectors in a (possibly degenerate) symplectic vector space $V=\F_2^n$,
such that the decorations of two nodes are (symplectic) orthogonal iff the
nodes are non-adjacent. The Coxeter group asociated to the
Dynkin diagram over $\C$ acts on the set of all roots by symplectic
isomorphisms. If the decorations form a basis of $V$, the
symplectic root system is called minimal.\\

We achieve a complete classification of symplectic root systems over $\F_2$ 
up to symplectic isomorphisms on arbitrary graphs. Especially we
clarify, which Dynkin diagram admits a symplectic root system for a given
nullity, i.e. the degree of degeneracy of the symplectic form and hence
the dimension of the nullspace $\dim(V^\perp)$. This nullity turns out to be
bounded by the coclique number of the graph. We call symplectic root systems
over nondegenerate symplectic vector spaces ``extraspecial'' as they correspond
to extraspecial groups in Section \nref{sec_CommutativityGraph}.\\

We conclude by giving credit to previous work:\\

Our notion of a symplectic root system has appeared already in
Lusztig's representation theory of finite Lie groups as a technical tool 
\cite{Lusz84} Chp. 9. It was also extensively studied in singularity theory
under the name \emph{vanishing lattices}\footnote{We thank Sergei Chmutov for
helpful comments.} by Wajnryb \cite{Wa80}, Chmutov \cite{Ch82}\cite{Ch83} and
Jansen \cite{Jan83}\cite{Jan85}. The possible groups
have been classified by \cite{Jan83} Thm. 4.8 and the number of isomorphism 
classes
of symplectic root systems for a given graph is reduced in \cite{Jan85} Thm. 7.5
to the case $\F_2$. To the best of our knowledge, our combinatorically derived
results are complementary and determine the explicit isomorphism classes over
$\F_2$, as well as the unique nullity, and apply also for non-minimal
symplectic root systems.

\subsection{Structure of the article}

We start with basic definitions in Section \nref{sec_Definition} and give first
examples and properties in Section \nref{sec_Properties}. Most importantly
we can prove already at this point a universal property of minimal
symplectic root systems, especially they are
unique up to isomorphism, as well as their existence. This shows that a
minimal symplectic root system of a given graph exists for precisely one
isomorphy type of symplectic vector spaces (nullity). However, this result does
not yet determine the nullity nor the isomorphy classes of the non-minimal
symplectic root systems, which is content of the remaining article.   

\begin{corollaryX}[Universal Property]
  Suppose $(f,V)$ and $(g,W)$ to be symplectic root systems on the same graph
  $\GG$ and assume moreover $(f,V)$ minimal. Then there exists a 
  homomorphism of symplectic root systems  $\phi:(f,V)\rightarrow (g,W)$.
  Especially two minimal symplectic root systems are always isomorphic. 
\end{corollaryX}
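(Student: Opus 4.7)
The plan is to exploit the defining property of minimality of $(f,V)$: the simple roots $\{f(v) \mid v \in \GG\}$ form a $\k$-basis of $V$. This lets me define $\phi:V\rightarrow W$ on the basis by setting $\phi(f(v)):=g(v)$ for every node $v$ of $\GG$, and extend linearly. By construction the resulting map is a $\k$-linear map satisfying $\phi\circ f=g$, so the only substantive point is to verify that $\phi$ preserves the symplectic form.

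Because the form on $V$ is bilinear, it suffices to check $\langle \phi(x),\phi(y)\rangle_W=\langle x,y\rangle_V$ on pairs of basis vectors $x=f(v)$, $y=f(v')$. For $v\neq v'$, the very definition of a symplectic root system says that the pairing of two simple roots equals the adjacency value of the corresponding nodes in $\GG$ (one of the two elements $0,1\in\F_2$); since both $(f,V)$ and $(g,W)$ sit over the same graph $\GG$, we get equality of both sides. For $v=v'$, the alternating property of the symplectic form makes both sides zero. Hence $\phi$ preserves the form on a basis and therefore everywhere, so it is a homomorphism of symplectic root systems.

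For the uniqueness assertion, suppose both $(f,V)$ and $(g,W)$ are minimal. Applying the construction just given in both directions produces homomorphisms $\phi:V\rightarrow W$ and $\psi:W\rightarrow V$ with $\phi f=g$ and $\psi g=f$. Then $\psi\circ\phi$ fixes each basis element $f(v)$ and so equals the identity on $V$; symmetrically for $W$. Thus $\phi$ is an isomorphism.

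The only step that requires genuine care is the well-definedness of $\phi$ via the basis, which is precisely where minimality is used: without it, the $f(v)$ would satisfy linear relations that must be respected by the corresponding $g(v)$, and this is exactly what might fail for a non-minimal source. Beyond that, the compatibility with the symplectic form is forced, not chosen, because on a minimal system the form is entirely encoded by the graph via the pairings of simple roots, and the graph is the same on both sides.
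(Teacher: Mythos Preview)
Your proof is correct and follows essentially the same approach as the paper. The paper defines a homomorphism of symplectic root systems merely as a linear map with $\phi\circ f=g$ and separates the symplectic-preservation statement into a preceding lemma (Lemma \nref{lm_firstUniversal}); you fold that lemma's argument directly into your proof, but the substance is identical.
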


\begin{lemmaX}[Existence]
  For every graph $\GG$ there exists a minimal symplectic root system. By
  the universal property it is unique
  up to isomorphism.
\end{lemmaX}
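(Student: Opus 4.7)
The plan is to construct the minimal symplectic root system explicitly and appeal to the Universal Property for uniqueness. Let $V := \F_2^{V(\GG)}$ be the $\F_2$-vector space with basis $\{v_i\}_{i \in V(\GG)}$ indexed by the vertices of $\GG$, and define a bilinear form $\omega \colon V \times V \to \F_2$ by setting $\omega(v_i, v_j) := 1$ if $i$ and $j$ are adjacent in $\GG$ and $\omega(v_i, v_j) := 0$ otherwise, extended bilinearly. The map $f \colon V(\GG) \to V$, $i \mapsto v_i$ will be the claimed decoration.

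First I would verify that $\omega$ is alternating, hence a (possibly degenerate) symplectic form in the sense of the paper. Its Gram matrix in the basis $\{v_i\}$ is exactly the adjacency matrix of $\GG$, which is symmetric and has zero diagonal since $\GG$ carries no loops. Over $\F_2$, a symmetric form with vanishing diagonal is precisely an alternating form, so $(V, \omega)$ is a symplectic vector space as required.

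Next I would check the symplectic root system axioms for $(f,V)$. By construction $\omega(f(i), f(j)) = 0$ iff $i$ and $j$ are non-adjacent in $\GG$, which is the defining orthogonality-vs-adjacency condition. The required action of the Coxeter group by symplectic isomorphisms is automatic, since each simple reflection takes the form of a symplectic transvection $s_i(w) = w + \omega(v_i, w)\, v_i$, and such transvections preserve $\omega$ tautologically; stability of the set of roots under this action is then part of the definition. Since $\{v_i\}_{i \in V(\GG)}$ is a basis of $V$ by construction, the root system is minimal, and uniqueness up to isomorphism follows at once from the Universal Property corollary stated just above.

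The construction is direct and I do not anticipate any real obstacle; the only $\F_2$-specific point worth flagging is that symmetric with vanishing diagonal coincides with alternating, which is exactly what allows the adjacency matrix of $\GG$ to serve as the Gram matrix of a symplectic form without any further adjustment.
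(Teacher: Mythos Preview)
Your construction is exactly the paper's: build $V$ as the $\F_2$-vector space on the vertex set, impose the adjacency matrix as the Gram matrix, observe that symmetric with zero diagonal means alternating over $\F_2$, and take $f(i)=v_i$. One remark: the paragraph on the Coxeter group action via transvections is extraneous, since the paper's definition of a symplectic root system only demands the generation and adjacency-versus-orthogonality conditions---the Weyl group action is treated separately and is not part of what you need to verify here.
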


We introduce a straightforward notion of quotients and find immediately:

\begin{corollaryX}
  For every graph $\GG$ there is up to isomorphism a unique minimal symplectic
  root system and all symplectic root system of $\GG$ are quotients thereof.
\end{corollaryX}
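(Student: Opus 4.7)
The plan is to combine the preceding two results in a direct way. The first assertion—uniqueness of the minimal symplectic root system up to isomorphism—is already contained in the Existence Lemma together with the Universal Property: the Existence Lemma supplies a minimal $(f,V)$ for $\GG$, and applying the Universal Property with both source and target minimal (using that minimality, being a basis condition, forces the resulting homomorphism to be bijective) yields uniqueness.

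For the substantive part, I take an arbitrary symplectic root system $(g,W)$ on $\GG$ and invoke the Universal Property to obtain a homomorphism $\phi\colon (f,V)\rightarrow (g,W)$ with $\phi(f_i)=g_i$ on each node. Since $f$ is a basis of $V$ by minimality, the image $\phi(V)$ is the span of the $g_i$; under the (standing) assumption that the simple roots generate the ambient space of a symplectic root system, this equals $W$, and $\phi$ is surjective.

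The next step is to identify $\ker\phi$ with a subspace of the nullspace $V^\perp$, so that $W\cong V/\ker\phi$ fits the notion of quotient introduced before the statement. This is immediate from compatibility of $\phi$ with the symplectic form together with surjectivity: for $x\in\ker\phi$ and arbitrary $y\in V$, one has $\langle x,y\rangle_V=\langle\phi(x),\phi(y)\rangle_W=0$, and as $\phi(y)$ ranges over all of $W$ this forces $x\in V^\perp$. Hence $(g,W)$ is isomorphic to the quotient of $(f,V)$ by the subspace $\ker\phi\subseteq V^\perp$, equipped with the induced decoration $f_i+\ker\phi$.

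The main obstacle I anticipate is the bookkeeping around the quotient construction—namely, checking that the induced decoration on $V/\ker\phi$ actually defines a symplectic root system isomorphic to $(g,W)$, and that the orthogonality pattern on the graph is preserved under passage to the quotient. Both reduce to the observation that $\phi$ is a symplectic map whose kernel lies in the radical, so the induced map on the quotient is a symplectic isomorphism onto $W$ that matches the decorations node-by-node; no further structural input beyond the Universal Property and the definition of quotient is required.
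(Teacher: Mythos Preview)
Your argument is correct and follows the same path as the paper: combine the Existence Lemma with the Universal Property (and Lemma~\nref{lm_firstUniversal}, which already gives surjectivity and symplecticity of $\phi$). Note that your discussion of $\ker\phi\subset V^\perp$ and the quotient space $V/\ker\phi$, while correct, is superfluous under the paper's Definition of quotient, which is phrased directly in terms of a surjective symplectic homomorphism $\phi\colon V\to W$ rather than a subspace of the radical; once you have $\phi$ from the Universal Property, $(g,W)=(\phi\circ f,W)$ is literally a quotient by definition.
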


%
%

In Section \nref{sec_Restriction} we then consider the
restriction of a symplectic root systems to an induced subgraph $\GG\subset
\DD$ and derive bounds for the change in nullity of the symplectic root systems.
As an application we prove a bound on the nullity of a symplectic root system in
terms of the coclique number of the graph and briefly discuss the extremal cases
of the inequality (ADE- vs. complete graphs).\\

In the main Section \nref{sec_Extension} of this paper we introduce a
construction that extends a given minimal symplectic root system on a subgraph
by one node. 
\begin{theoremX}[Minimal Extensions]
  Let $\DD$ be a graph, $p\in\DD$ a node and $G=(g,W)$ a
  {minimal} symplectic root system of a spanning subgraph $\GG:=\DD-p$.
  Then there exists a unique minimal symplectic root system $F=(f,V)$ on $\DD$ 
  extending $G$.
\end{theoremX}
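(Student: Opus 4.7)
The plan is to build $V$ as the one-dimensional extension $V := W \oplus \F_2 v$ of $W$ and set $f(p) := v$, so that the decorations $\{g(q):q\in\GG\}\cup\{v\}$ form a basis of $V$ by construction and minimality of $F$ is automatic. The crucial ingredient is the minimality of $G$: since $\{g(q):q\in\GG\}$ is a basis of $W$, the adjacency pattern of $p$ in $\DD$ determines a unique linear functional $\alpha\in W^*$ via $\alpha(g(q)):=1$ if $q\sim p$ and $\alpha(g(q)):=0$ otherwise. This is the one place where minimality of $G$ is used essentially; without it the adjacency data of $p$ could be overdetermined and an extension need not exist.

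For existence I would extend the symplectic form from $W$ to $V$ by declaring $\langle v, w\rangle := \alpha(w)$ for $w\in W$ and $\langle v, v\rangle := 0$, and then extending bilinearly. Symmetry and bilinearity are built in, and the alternating identity $\langle x, x\rangle = 0$ for $x \in V$ follows because in characteristic $2$ the cross-terms of $\langle w+av, w+av\rangle$ cancel. Defining $f|_\GG := g$ and $f(p) := v$ then realises all adjacencies of $\DD$ as symplectic pairings by the very choice of $\alpha$, so $F = (f,V)$ is a minimal symplectic root system on $\DD$ extending $G$.

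For uniqueness I would consider two minimal extensions $(f_1, V_1)$ and $(f_2, V_2)$. Each $V_i$ has dimension $|\GG|+1$ and contains a copy of $W$ of codimension one via the embedding prescribed by $f_i|_\GG = g$, while $f_i(p)$ completes a basis of $V_i$. The symplectic pairings $\langle f_i(p), g(q)\rangle_{V_i}$ are dictated by the adjacency of $p$ in $\DD$ and hence agree for $i=1,2$, and $\langle f_i(p), f_i(p)\rangle_{V_i} = 0$ by the alternating property. The linear map $\phi: V_1 \to V_2$ sending $g(q) \mapsto g(q)$ for $q\in\GG$ and $f_1(p) \mapsto f_2(p)$ is therefore a bijection of bases matching every symplectic pairing, hence a symplectic isomorphism of root systems carrying $F_1$ to $F_2$.

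The main obstacle, such as it is, is just the well-definedness of $\alpha$, which is exactly the hypothesis that $G$ is minimal; everything else reduces to routine linear algebra in characteristic $2$.
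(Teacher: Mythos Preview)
Your proof is correct, but it takes a substantially more direct route than the paper's own argument. The paper proves this result (Theorem~\nref{thm_Extension}) through an elaborate case analysis: it introduces the auxiliary \emph{mixed completion} $\langle\langle,\rangle\rangle$ of Definition~\nref{def_MixedCompletion}, uses it to split the adjacency functional $\tilde\lambda$ into an ``extraspecial part'' $\tilde\lambda_{W/W^\perp}$ and a ``nullspace part'' $\tilde\lambda_{W^\perp}$, and then constructs the extension differently according to whether the nullspace part vanishes. Your argument instead simply adjoins a formal vector $v$ and extends the form by the adjacency functional~$\alpha$ --- essentially the idea behind the paper's Lemma~\nref{lmNEW_Existence} (which the paper itself credits to the referee as a ``significantly easier approach''), carried out in the relative setting over a given $G$.

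What each approach buys: yours delivers existence and uniqueness with essentially no overhead beyond checking that the extended form is alternating. The paper's more involved construction, however, yields strictly more: it determines explicitly the \emph{type} $(n,k)$ of the extension $V$ --- namely whether the nullity goes up by one or down by one relative to $W$ --- and pinpoints the new decoration $f(p)$ in terms of a symplectic basis. That extra information is not needed for the bare statement you were asked to prove, but it is exactly what drives the inductive nullity computations for the ADE diagrams in Section~\nref{sec_CartanType}.
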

\begin{proof}
  The construction proceeds in the following steps for extending extraspecial,
  nullspace and finally arbitrary symplectic root systems. From the second
  step on, the extensions fall into two distinct cases yielding for $V$ either
  higher or lower nullity than $W$ according to the different cases in the
  Restriction Theorem \nref{thm_Restriction}.
\begin{itemize}
 \item In Section \nref{sec_extendingExtraspecial} we construct almost
  extraspecial extensions of	
  extraspecial symplectic root systems, i.e. $W$ nondegenerate.
  The proof uses the minimality of $G$ to express the indicator function
  $\lambda$ of a neighbourhood of $p$ in $\DD$ as a linear form
  $\tilde{\lambda}$. Then is uses the
  assumed nondegeneracy to construct a distinguished element
  $w_0\in W$ and a thereof the new decoration $f(p)\in V$.
  \item In Section \nref{sec_extendingNullspace} we classify in contrast
  extensions of symplectic root systems consisting only of nullspace
  $W=W^\perp$. Especially $\GG$ is totally disconnected.
  Thereby we need surprisingly the choice of an additional nondegenerate
  symmetric bilinear form
  $(,)$ on the nullspace regarded as  vector space over $\F_2$. As before we
  construct a linear form $\tilde{\lambda}$, but
  we yield two cases:
  Either $\tilde{\lambda}=0$, then $\DD$ is again totally disconnected,
  $V=V^\perp$ is extended by yet another nullvector and has higher nullity then
  $W$. For $\tilde{\lambda}\neq0$ we decompose 
  $W=\ker(\tilde{\lambda})\oplus x\k$ and extend $x$ by
  the to-be-definied decoration  $f(p)=y$ to a new hyperbolic plane $H_1$.
  Especially $V$ then has lower nullity than $W$
  \item In Section \nref{sec_extendingArbitrary} we combine the preceeding
results and achieve the final classification result for extending arbitrary
sympletic root systems by one point. The crucial ingredient is an artificial 
nondegenerate, symmetric bilinear form $\langle\langle,\rangle\rangle$ extending
the symplectic form. Thereby we effectively write the neighbourhood of the
new point as a symmetric difference of two graphs obtained by the two previous
methods.
  \item In Section \nref{sec_doubleExtension} we give an additional very
explicit formula for 2-point-extensions of a given extraspecial symplectic
root system. The criterium avoids the use of the artificial mixed
bilinear form and provides a nice characterization in terms of the two
1-point extensions and their possible interaction. This is used in the example
calculations for ADE-type in Section \nref{sec_CartanType}.
\end{itemize}
\end{proof}
%
%

Note
that determining the nullity for a given graph is tedious and usually requires
to successively apply the extension theorem. As an example, in Section
\nref{sec_CartanType} we determine explicitly decoration and
nullity for all symplectic root systems corresponding to Dynkin diagrams
of finite Cartan type, i.e. in the ADE family. We start with an induction on
$A_{2n-2}\rightarrow A_{2n}$, which turns out to yield minimal
symplectic root systems of extraspecial type. Then we extend by explicit nodes
to reach the other diagrams with higher nullity. We indeed find that Cartan type
root systems typically require the smallest possible nullity ($0$ or $1$),
except $D_{2n}$ needs $2$. We then determine all non-minimal quotients up to 
Dynkin diagram automorphisms.\\

As further topics, we first connect in Section \nref{sec_Weyl} the notion of
symplectic root systems to the well-known notion over $\C$. We describe, how
a symplectic decoration of the graph as considered here can be additively
extended to a full root systems and prove that the Coxeter group accociated to
the prescribed Cartan matrix (e.g. the Weyl group) acts on this natural set of
symplectic vectors by symplectic isomorphisms.\\

Section \nref{sec_CommutativityGraph} explains the application of symplectic
root systems to Nichols algebras. In 
\cite{Len13}  we constructed the first Nichols algebras of rank
$>2$ over nonabelian groups $G$. The construction starts with a known
finite-dimensional Nichols algebra over an abelian group $\Gamma$ of
simply-laced Cartan type with a diagram automorphism $\Z_2$. Then, a symplectic
root system over the field $\F_2$ is used to construct a new link-indecomposable
finite-dimensional covering Nichols algebra over a central extension
$\Z_2\rightarrow G \rightarrow \Gamma$ and again over $\C$. \\

Here, the symplectic vector space over $\F_2$ is
$V:=\Gamma/\Gamma^2$ with the symplectic form induced by the commutator map
on $G$. The symplectic root systems then provides a basis of $V$ adapted to the
existing Dynkin diagram. Especially the nullity of $V$ corresponds to a
specific size of the center $Z(G)$. We hope the present classification
will enable the classification of diagonal Nichols algebras over
nilpotent groups.

\section{Main Definition}\label{sec_Definition}
Throughout this article, we assume all graphs to be finite and
all vector spaces to be finite-dimensional.

\begin{definition}
  The following unusually general notion is custom in the theory of
  $p$-groups, see e.g. \cite{Hup83} paragraph II.9 (p. 215): A (possibly
  degenerate) symplectic vector
  space is a vector space $V$ over a field $\k$ with a (possibly degenerate)
  bilinear form 
  $$\langle ,\rangle:\; V\times V\rightarrow \k$$
  that is alternating: 
  $$\langle v,v\rangle=0$$
  Note that alternating forms are always skew-symmetric, but for
  $\mbox{char}(\k)=2$ skew-symmetric is equivalent to symmetric and does not
  imply alternating. The {nullspace} (or radical) of $V$ is
  defined as 
  $$V^\perp:=\{v\in V\;|\;\forall_{w\in V}\; \langle v,w\rangle=0\}$$
  Note that $W^\perp$ for a subspace denotes the radical $\subset W$, not the
  complement $\subset V$. 
\end{definition}

\begin{theorem}\label{thm_SymplecticBasis}(\cite{Hup83} Satz II.9.6 (p.217))
Every (finite-dimensional) symplectic vector space $V,\langle,\rangle$ possesses
a {symplectic basis}\index{Symplectic
Basis} $\{x_1,\ldots x_n,y_1,\ldots y_n,z_1,\ldots z_k\}$
consisting of mutually orthogonal {nullvectors} $z_i\in V^\perp$ and hyperbolic
planes $H_i=x_i\k\oplus y_i\k,\quad \langle x_i,y_i\rangle=1$
\begin{align*}
 V\cong & \left(x_1\k\oplus y_1\k\right)\oplus^\perp 
	  \left(x_2\k\oplus y_2\k\right)\oplus^\perp\cdots
	  \oplus^\perp z_1\k \oplus^\perp z_2\k\oplus^\perp\cdots
\end{align*}
\end{theorem}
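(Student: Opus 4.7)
The plan is the standard two-stage argument: first peel off the nullspace as a direct summand, then induct on the dimension of the complement to build up hyperbolic planes one at a time.

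First I would pick any vector-space complement $V'$ of the nullspace, so $V = V^\perp \oplus V'$, and take $\{z_1,\ldots,z_k\}$ to be any basis of $V^\perp$; these will serve as the nullvector part of the desired symplectic basis. The restriction of $\langle\,,\rangle$ to $V'$ is automatically nondegenerate: if $v \in V'$ pairs trivially with every element of $V'$, then, since it already pairs trivially with $V^\perp$ by definition, it lies in $V^\perp \cap V' = 0$. So everything reduces to showing that a nondegenerate alternating $(V',\langle\,,\rangle)$ of even dimension admits a symplectic basis of hyperbolic pairs.

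For the inductive step, assume $V' \neq 0$ and pick any nonzero $x_1 \in V'$. By nondegeneracy there is some $y \in V'$ with $\langle x_1, y\rangle \neq 0$, and rescaling produces $y_1$ with $\langle x_1,y_1\rangle = 1$. Because the form is alternating, $\langle x_1,x_1\rangle = \langle y_1,y_1\rangle = 0$, so the $2{\times}2$ Gram matrix of $H_1 := x_1\k \oplus y_1\k$ is nondegenerate and $H_1$ is a genuine hyperbolic plane. A standard projection formula of the shape $v \mapsto v - \langle v,y_1\rangle x_1 + \langle v,x_1\rangle y_1$ (with signs adjusted to the characteristic) then splits $V' = H_1 \oplus^\perp H_1^\perp$ as an orthogonal direct sum, and the restriction to $H_1^\perp$ is again nondegenerate. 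Applying the induction hypothesis to $H_1^\perp$ of dimension $\dim V' - 2$ produces the remaining pairs $(x_2,y_2),\ldots,(x_n,y_n)$, and concatenation with $\{x_1,y_1\}$ and $\{z_1,\ldots,z_k\}$ yields the decomposition stated in the theorem.

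The genuinely subtle point I would flag as the main obstacle is the characteristic-two aspect that the definition in the paper is careful to isolate: the hypothesis must really be \emph{alternating}, not merely skew-symmetric. Over $\F_2$, skew-symmetry coincides with symmetry and does not prevent $\langle x_1,x_1\rangle \neq 0$; in that pathological situation $H_1$ would degenerate and the orthogonal splitting $V' = H_1 \oplus^\perp H_1^\perp$ would fail, collapsing the induction. The alternating condition is precisely what keeps every newly chosen $H_1$ a true hyperbolic plane, so it is what makes the whole iterative procedure go through uniformly and, in particular, makes the statement nontrivial over $\F_2$ where the rest of the paper operates.
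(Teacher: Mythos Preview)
The paper does not prove this theorem at all; it is stated with a citation to Huppert, \emph{Endliche Gruppen} (Satz II.9.6), and used as a black box. So there is no paper-internal argument to compare against.

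That said, your proposal is correct and is exactly the standard proof one finds in the cited reference and elsewhere: split off $V^\perp$ by a linear complement, observe the restriction becomes nondegenerate, then peel off hyperbolic planes inductively via the projection $v\mapsto v-\langle v,y_1\rangle x_1+\langle v,x_1\rangle y_1$. Your remark about the alternating-versus-skew-symmetric distinction in characteristic $2$ is also the right point to emphasize, and it matches precisely the caveat the paper makes in the definition preceding the theorem. One cosmetic note: over $\F_2$ the ``rescaling'' of $y$ to get $\langle x_1,y_1\rangle=1$ is vacuous, since a nonzero value is already $1$; over general $\k$ it is needed. Either way the argument goes through.
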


\begin{definition}  Denote by $k=dim V^\perp$ the {nullity}, the number of
  nullvectors $z_i$ in a basis. Further denote by $n$ the conullity, the number
  of hyperbolic planes $H_i=x_i\k\oplus y_i\k$ generating $V$. Altogether
  $2n+k=\dim V$. By Theorem
  \nref{thm_SymplecticBasis} the {type} $(n,k)$ precisely characterizes the
  isomorphism type of a symplectic vector space $V$.  
  \begin{itemize}   
  \item We call $V$ {extraspecial}, iff the form is nondegenerate $V^\perp=0$,
      i.e. the $V$ is of type $(n,0)$ and necessarily of even dimension $2n$.
      The name is chosen as the associated $p$-groups are usually called
      extraspecial, see Section \nref{sec_CommutativityGraph}.
   \item We call $V$ {almost extrapecial}, iff $\dim V^\perp=1$, i.e. $V$ is
      of type $(n,1)$  and necessarily of odd dimension $2n+1$. The name
      is chosen as the associated $p$-groups are usually called almost
      extraspecial, see Section \nref{sec_CommutativityGraph}.
  \end{itemize}
\end{definition}

The key notion of a symplectic root system over $\F_2$ will now be given purely
in terms of graph theory, as the Dynkin diagram is always simply-laced in
characteristic $2$. Note that we yet have no satisfying general definition for
arbitrary characteristic.

\begin{definition}
  A symplectic root system $F=(f,V)$ over the field $\k=\F_2$
  on a (finite) graph $\DD$ consists of a (finite-dimensional)
  symplectic vector space $V$ over the field $\F_2$ together with a node
  decoration $f:\DD\rightarrow V$  such that
  \begin{itemize}
    \item The image $f(\DD)$ generates $V$ as a vector space.
    \item Two nodes $p,q\in \DD$ are adjacent in the graph $\DD$ iff their
      $f$-decorations in $V$ are not orthogonal 
      $\langle f(\alpha),f(\beta)\rangle \neq 0$ (then already $=1_{\F_2}$),
  \end{itemize}
   We call $F=(f,V)$ of {type} $(n,k)$, iff the symplectic vector space $V$
  is of type $(n,k)$. We call $F$ {minimal} iff the decorations
  $\{f(\alpha)\}_{\alpha\in\DD}$ form a basis of $V$.
 \end{definition}

\section{First Properties And Examples}\label{sec_Properties}

\begin{example}\label{exm_point}
  Let $\GG=A_1=\{p\}$ be an isolated point. Because the node decorations
  $f(p)$
  have to generate $V$, the only symplectic root systems $F=(f,V)$ are
  \begin{itemize}
    \item $V=z\F_2$ with $f(p)=z$ of type $(0,1)$, which is minimal.
    \item $V=\{0\}$ with $f(p)=0$ of type $(0,0)$, which is not minimal.
  \end{itemize}
\end{example}

\begin{lemma}\label{lm_Disconnected}
  Let $\GG=\GG_1\cup\GG_2$ be a {disconnected union} of subgraphs. Then any
  {minimal} symplectic root system  $F=(f,V)$ decomposes as
  $V=V_1\oplus^\perp V_2$ with $f(\GG_{i})\subset  V_{i}$ inducing minimal
  symplectic root system on each subgraphs. The orthogonal sum especially
  implies, that if these smaller symplectic root system are of type
  $(n_1,k_1)$ resp. $(n_2,k_2)$,  then $F$ is of type $(n_1+n_2,k_1+k_2)$.
\end{lemma}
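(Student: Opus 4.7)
The plan is to define $V_i$ as the span of $f(\GG_i)$ inside $V$, equipped with the restricted symplectic form, and then check the three claims in order: direct sum decomposition, orthogonality, and induced minimal symplectic root system structure on each factor.

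First I would use minimality directly: since $F$ is minimal, the set $\{f(\alpha)\}_{\alpha\in\GG}$ is a basis of $V$, and $\GG=\GG_1\sqcup\GG_2$ partitions this basis into two disjoint subsets. Hence $V=V_1\oplus V_2$ as vector spaces, where $V_i := \mathrm{span}\, f(\GG_i)$, and $\{f(\alpha)\}_{\alpha\in\GG_i}$ is automatically a basis of $V_i$. In particular $f(\GG_i)$ generates $V_i$, giving one of the two axioms of a symplectic root system on $\GG_i$.

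Next I would show the sum is orthogonal. For any $p\in\GG_1$ and $q\in\GG_2$, the nodes are non-adjacent in $\GG$ because $\GG_1$ and $\GG_2$ are placed in different connected components of $\GG=\GG_1\cup\GG_2$; hence by the root system axiom $\langle f(p),f(q)\rangle=0$. By bilinearity this extends to $\langle v_1,v_2\rangle=0$ for all $v_1\in V_1$, $v_2\in V_2$, so indeed $V=V_1\oplus^\perp V_2$. In particular the restriction $\langle\,,\,\rangle|_{V_i}$ is a (possibly degenerate) alternating form making each $V_i$ a symplectic vector space, and the second root system axiom on $\GG_i$ is inherited verbatim from $\GG$ (since $\GG_i$ is an induced subgraph and the form on $V_i$ is the restriction). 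Thus each $(f|_{\GG_i},V_i)$ is a minimal symplectic root system on $\GG_i$.

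Finally I would deduce the type statement. Picking symplectic bases of $V_1$ and $V_2$ as in Theorem \nref{thm_SymplecticBasis} and concatenating them yields a symplectic basis of $V$, because any vector in $V_1^\perp$ (radical of $V_1$) is automatically orthogonal to all of $V_2$ by the previous paragraph, hence lies in the nullspace of $V$; and conversely a nullvector of $V$ projects to nullvectors in each $V_i$. So the nullities satisfy $k=k_1+k_2$ and the hyperbolic-plane counts satisfy $n=n_1+n_2$.

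No step presents a real obstacle; the only thing one has to be careful about is that minimality is essential in the very first step, since without it the spans $V_i$ need not intersect trivially, and then neither the direct sum nor the type addition would hold — this is precisely what rules out non-minimal pathologies such as $f(p)=0$ from Example \nref{exm_point}.
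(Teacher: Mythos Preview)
Your proof is correct and follows essentially the same route as the paper: define $V_i$ as the span of $f(\GG_i)$, use minimality to get the direct sum, use non-adjacency across components plus bilinearity to get orthogonality. You add a bit more detail on the type statement (the paper simply asserts it as a consequence of the orthogonal sum), and your closing remark on why minimality is indispensable is a nice touch.
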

\begin{proof}
  Because the decorations $\{f(p)\}_{p\in\GG}$ by the assumed minimality of
  $F$ form a basis of $V$, we may decompose $V$ as direct sum of subspaces
  $V_i$ with basis $\{f(p)\}_{p\in\GG_i}$:
  $$V=V_1\oplus V_2$$
  $\GG_1,\;\GG_2$ were assumed to be mutually disconnected, so the defining
  property of symplectic root systems implies
  $$\forall_{p\in\GG_1\;q\in\GG_2}\; \langle f(p),f(q)\rangle=0$$
  As the sets $\{f(p)\}_{p\in\GG_i}$ generate respectively $V_i$ the
  sum is orthogonal as asserted.
\end{proof}

\begin{example}
  Let $\GG$ be a {totally disconnected graph}, then the preceeding lemma
  shows each minimal symplectic root system to be of type $(0,|\GG|)$ as $V$
  decomposes orthogonally into $1$-dimensional nullspaces for each isolated
  point as in example \nref{exm_point}. 
\end{example}

If the graph contains proper edges, a pure nullspace
$V=V^\perp$ will not suffice:

\begin{example}\label{exm_A2}
  Let $\GG=A_2=\{p,q\}$ be two connected points. The decorations
  $f(p),f(q)$ have to generate $V$, thus it can have dimension at most $2$.
  Pure nullspace cases can be discarded, because $\langle f(p),f(q)\rangle=0$
  contrary to the assumed edge $pq$.\\

  Hence the only remaining possibility is a single hyperbolic plane
  $V=x\F_2\oplus^{\not\perp} y\F_2$, i.e. $V$ of type $(1,0)$.  Here, indeed we
  yield a minimal symplectic root system  $f(p)=x$ and $f(q)=y$ with $\langle
  f(p),f(q)\rangle=1$. Note that there are many other possibilities, e.g.
  $f(p)=x+y$ and $f(q)=y$, but these choices obviously just differ by a linear
  isomorphism that preserves the symplectic form.
\end{example}

The last example shows, that one has to classify symplectic root system
according to some isomorphism criterion.

\begin{definition}[Homomorphisms of symplectic vector spaces]
  Let $V,W$ be (possibly degenerate) symplectic vector spaces, then we call a
  linear map $\phi:V\rightarrow W$ a symplectic homomorphism, iff
  $$\forall_{v,w\in V}\;\langle v,w\rangle_V
    =\langle\phi(v),\phi(w)\rangle_W$$
  If moreover $\phi$ is bijective, we call $\phi$ symplectic isomorphism. Note
  that if $V$ is degenerate, a symplectic homomorphism needs not to be
  bijective.
  Rather, $\phi$ might possess a kernel $\ker(\phi)\subset V^\perp$.
  The conullies of $V$ and $\text{Im}(\phi)\subset W$ coincide, but
  the nullity of $V$ might be higher.
\end{definition}

\begin{definition}[Morphisms]
  Let $\GG$ be a fixed graph and $F=(f,V),\;G=(g,W)$ symplectic root systems of
  $\GG$. A {symplectic root system homomorphism} $\phi:F\rightarrow G$ is a
  linear  map $\phi:V\rightarrow W$ that  intertwines the decorations:
  $\phi\circ f=g$. If moreover $\phi$ is bijective, we call $\phi$ a 
  {symplectic root system isomorphism}. 
\end{definition}
\begin{remark}
    Note that we require the graph $\DD,\GG$ to be fixed sets.
    Hence a graph automorphism on $\DD$
    may interchange non-isomorphic symplectic root system in this
    definition. For nontrivial examples of ADE-type see Remark
    \nref{rem_ActionOnCartan}.
\end{remark}
 
The assertion of $\phi\circ f=g$ for symplectic root
system morphisms already proves $\phi$ to be a
a symplectic homomorphism, as the defining property of a symplectic root system
$(f,V)$ already fixes the entire symplectic form on $V$:

\begin{lemma}\label{lm_firstUniversal}
  Suppose $\phi:(f,V)\rightarrow (g,W)$ to be a homomorphism between symplectic
  root systems. Then $\phi$ is already a surjective symplectic homomorphism
  $\langle v,w\rangle_V=\langle\phi(v),\phi(w)\rangle_W$.
\end{lemma}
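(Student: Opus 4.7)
The plan is to prove surjectivity from the generating property and to prove the symplectic compatibility by reducing the bilinear identity to the generating set $f(\DD)$, where both sides can be read off from the adjacency structure of $\GG$.

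First I would establish surjectivity. By the first axiom of a symplectic root system, $g(\GG)$ generates $W$ and $f(\GG)$ generates $V$. Since $\phi$ is linear and $\phi \circ f = g$, we have $\phi(V) = \phi(\langle f(\GG)\rangle) = \langle \phi f(\GG)\rangle = \langle g(\GG)\rangle = W$, so $\phi$ is surjective.

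For the symplectic compatibility I would first verify the identity on pairs of generators. For any $p,q \in \GG$, the defining adjacency axiom applied to $(f,V)$ and to $(g,W)$ gives
\begin{align*}
 \langle f(p),f(q)\rangle_V = 1 \iff p \sim q \text{ in } \GG \iff \langle g(p),g(q)\rangle_W = 1,
\end{align*}
and both sides vanish when $p=q$ because the forms are alternating. Hence
\begin{align*}
 \langle f(p),f(q)\rangle_V = \langle g(p),g(q)\rangle_W = \langle \phi f(p),\phi f(q)\rangle_W
\end{align*}
for all $p,q \in \GG$. Since $f(\GG)$ spans $V$, every pair $v,w \in V$ can be written as $v = \sum_i f(p_i)$, $w = \sum_j f(q_j)$, and bilinearity of both forms together with linearity of $\phi$ propagates the identity:
\begin{align*}
 \langle v,w\rangle_V = \sum_{i,j}\langle f(p_i),f(q_j)\rangle_V = \sum_{i,j}\langle \phi f(p_i),\phi f(q_j)\rangle_W = \langle \phi(v),\phi(w)\rangle_W.
\end{align*}

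There is no real obstacle here; the lemma is essentially a formal consequence of the fact that a symplectic root system decoration encodes the entire bilinear form via adjacency in $\GG$. The only subtlety worth flagging is that $f(\GG)$ need not be a basis (minimality is not assumed), so one must phrase the argument in terms of a spanning set rather than a basis, as done above.
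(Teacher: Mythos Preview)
Your proof is correct and follows essentially the same approach as the paper: verify the symplectic identity on decorations using that both root systems encode the same adjacency data, then extend by bilinearity via the spanning property, and obtain surjectivity from $\phi\circ f=g$ together with the fact that $g(\GG)$ generates $W$. Your write-up is in fact a bit more explicit than the paper's (you spell out the $p=q$ case and the bilinear extension), but the underlying argument is identical.
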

\begin{proof}
    We calculate that $\phi$ preserves the symplectic form. First, the defining
    property of the symplectic root systems $(f,V)$ and $(g,W)$ on the same
    graph already fixes the value of the symplectic form on the decorations: 
    \begin{align*}
    \forall_{p,q\in \GG}\;\; 
    \langle f(p),f(q)\rangle
    &=\langle g(p),g(q)\rangle =0_{\F_2},1_{\F_2}
  \end{align*}  
  Because we assumed $\phi$ to be a homomorphism of symplectic root systems 
  $\phi\circ f=g$
  $$\forall_{p,q\in \GG}\;\;\langle f(p),f(q)\rangle =
  \langle\phi(f(p)),\phi(f(q))\rangle$$
  so $\phi$ is a symplectic homomorphism $V\to W$. Moreover, by definition of a
  symplectic root system the images $f(p)$ generate $V$, hence $\phi$ is
  necessarily surjective.
\end{proof}

Because for a minimal symplectic root system $(f,V)$ the decorations
$\left\{f(p)\right\}_{p\in \GG}$ even form by definition a basis of $V$, we
always find in this case a unique linear map $\phi:V\rightarrow W$ with
$g=\phi\circ f$. 

\begin{corollary}[Universal Property]\label{cor_AlwaysHomomorphism}
  Suppose $(f,V)$ and $(g,W)$ to be symplectic root systems on the same graph
  $\GG$ and assume moreover $(f,V)$ minimal. Then there exists a 
  unique homomorphism of symplectic root systems  $\phi:(f,V)\rightarrow (g,W)$.
  Especially two minimal symplectic root systems are always isomorphic.
\end{corollary}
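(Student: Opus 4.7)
The plan is to leverage the minimality of $(f,V)$ to build $\phi$ explicitly on a basis, and then invoke Lemma \nref{lm_firstUniversal} to upgrade the resulting linear map to a symplectic root system homomorphism. Uniqueness and surjectivity will then come essentially for free, and the final isomorphism claim will follow by a dimension/basis comparison.

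Concretely, the first step is to exploit that by minimality the family $\{f(p)\}_{p\in\GG}$ is a basis of $V$. Hence there is a unique linear map $\phi:V\to W$ with $\phi(f(p))=g(p)$ for every node $p\in\GG$, i.e.\ satisfying $\phi\circ f=g$. Any other linear map with this property would have to agree with $\phi$ on the basis $\{f(p)\}$ and therefore coincide with $\phi$ on all of $V$; this already yields the uniqueness clause. By construction $\phi$ is a homomorphism of symplectic root systems in the sense of our definition, so Lemma \nref{lm_firstUniversal} applies and tells us for free that $\phi$ is in fact a surjective symplectic homomorphism, i.e.\ it preserves the symplectic form. This settles the existence and uniqueness of $\phi$.

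For the last sentence, assume in addition that $(g,W)$ is minimal, so that $\{g(p)\}_{p\in\GG}$ is a basis of $W$. Since $\phi(f(p))=g(p)$, the linear map $\phi$ sends the basis $\{f(p)\}$ of $V$ to the basis $\{g(p)\}$ of $W$; in particular $\dim V=\dim W=|\GG|$ and $\phi$ is bijective. Thus $\phi$ is a symplectic root system isomorphism, and any two minimal symplectic root systems on $\GG$ are isomorphic.

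There is no real obstacle here: the content is packaged into Lemma \nref{lm_firstUniversal}, which already guarantees that any morphism respecting the decorations automatically respects the symplectic form. The only point worth being careful about is to distinguish the two roles of minimality, namely ensuring \emph{existence} of the linear extension on the source side $(f,V)$ and ensuring \emph{bijectivity} on the target side $(g,W)$ in the isomorphism statement.
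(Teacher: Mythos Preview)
Your proof is correct and follows essentially the same route as the paper: use minimality of $(f,V)$ to define the unique linear map $\phi$ on the basis $\{f(p)\}$, then invoke Lemma \nref{lm_firstUniversal} to conclude that $\phi$ is a surjective symplectic homomorphism, and finally use minimality of $(g,W)$ for the isomorphism claim. The only cosmetic difference is that the paper phrases the last step via dimension/finiteness (``any symplectic root system homomorphism into a minimal one is an isomorphism''), whereas you argue directly that $\phi$ sends a basis to a basis.
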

On the other hand, since $\GG$ is supposed finite: If $(g,W)$ is minimal, then
any symplectic root system homomorphism $\phi:(f,V)\rightarrow (g,W)$ is an
isomorphism.\\

By inducing a symplectic form on the formal vector space generated by the nodes
of $\GG$ we find morover\footnote{We thank the referee for pointing out this
significantly easier approach}:

\begin{lemma}[Existence]\label{lmNEW_Existence}
  For every graph $\GG$ there exists a minimal symplectic root system. By
  the universal property (Corollary \nref{cor_AlwaysHomomorphism}) it is unique
  up to isomorphism.
\end{lemma}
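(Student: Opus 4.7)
The plan is to exhibit the obvious candidate explicitly: take the formal $\F_2$-vector space generated by the nodes of $\GG$, equip it with the symplectic form read off directly from the adjacency relation, and verify the three defining axioms by hand. Uniqueness will then follow for free from the universal property (Corollary \ref*{cor_AlwaysHomomorphism}).

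Concretely, I would let $V:=\F_2^{\GG}$ with distinguished basis $\{e_p\}_{p\in\GG}$, define $f(p):=e_p$, and set
$$\langle e_p,e_q\rangle := \begin{cases} 1,& p\sim q \text{ in } \GG,\\ 0,& \text{otherwise,}\end{cases}$$
extending bilinearly to $V\times V$. Then $f(\GG)$ is by construction a basis of $V$, so $f$ generates $V$ and the would-be symplectic root system is automatically minimal; and orthogonality of $f(p),f(q)$ matches non-adjacency in $\GG$ by the very definition of the form. The only thing that actually needs checking is that the bilinear form qualifies as symplectic.

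The single subtle point — and the main (small) obstacle in characteristic $2$ — is that the form must be alternating, not merely symmetric. For an arbitrary vector $v=\sum_{p}a_p e_p\in V$ one computes
$$\langle v,v\rangle \;=\; \sum_{p,q\in\GG} a_p a_q\,\langle e_p,e_q\rangle \;=\; \sum_{\{p,q\}:\,p\sim q} 2\,a_p a_q \;=\; 0,$$
where the factor $2$ appears because adjacency is symmetric (each unordered edge is counted twice in the sum over ordered pairs), and the diagonal terms vanish because a graph has no loops, so $\langle e_p,e_p\rangle=0$. In $\F_2$ the factor of $2$ kills the sum, so the form is alternating, as required by the definition of a (possibly degenerate) symplectic vector space.

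Having verified the three axioms, $(f,V)$ is a minimal symplectic root system on $\GG$, proving existence. For uniqueness up to isomorphism, suppose $(g,W)$ is any other minimal symplectic root system on $\GG$. By Corollary \ref*{cor_AlwaysHomomorphism} there exists a symplectic root system homomorphism $\phi:(f,V)\to(g,W)$, which by Lemma \ref*{lm_firstUniversal} is surjective; since $W$ is minimal, the images $\{g(p)\}$ form a basis, so $\phi$ carries the basis $\{f(p)\}$ of $V$ bijectively onto the basis $\{g(p)\}$ of $W$ and is therefore an isomorphism.
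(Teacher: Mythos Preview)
Your proof is correct and essentially identical to the paper's own argument: both take the free $\F_2$-vector space on the nodes with the adjacency-induced form, and verify alternation via the same ``factor of $2$'' computation. Your added paragraph spelling out uniqueness is fine but redundant, since the paper (like the statement itself) simply invokes Corollary~\ref{cor_AlwaysHomomorphism}.
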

\begin{proof}
  Consider the vector space $V$ over $\F_2$ generated by a basis $\{v_p\;|\;p\in
  \GG\}$ and the symplectic form defined on this basis by 
  $$\langle v_p,v_q \rangle=\begin{cases}
                             1_{\F_2}, & \text{if $p\neq q$ adjacent}\\
			     0_{\F_2}, & \text{else}
                            \end{cases}$$
  The bilinear form $\langle,\rangle$ is symmetric and we check it is indeed an
  alternating form over $\F_2$:
  $$\langle\sum_{p\in\GG} a_pv_p,\sum_{p\in\GG} a_pv_p\rangle
  =\sum_{p\in\GG} a_p^2\langle v_p,v_p\rangle
  +2\sum_{p\neq q\in \GG} a_pa_q\langle v_p,v_q\rangle=0$$
  Moreover, it is clear that by definition $F:=(f,V)$ with $f(p)=v_p$ is a
  symplectic  root system.
\end{proof}
%
%

Let conversely be $\phi:\;V\rightarrow W$ be a {surjective symplectic
homomorphism}, i.e
$\phi$ surjective and 
$$\langle v,w\rangle_V=\langle
  \phi(v),\phi(w)\rangle_W$$
For a given symplectic root system $F=(f,V)$ on a graph $\GG$ we may consider
the pair $G:=(\phi\circ f,W)$ and verify for $G$ the defining property of a
symplectic root system on the same underlying graph $\GG$. First we check  
$$\forall_{p,q\in \GG}\;\;\langle (\phi\circ f)(p),(\phi\circ f)(q)\rangle_W 
  =\langle f(p),f(q)\rangle_V=0_{\F_2},1_{\F_2}$$
Secondly by definition the symplectic root system $F$ requires $\text{Im}(f)$ to
generate $V$ and thus by the assumed surjectivity of $\phi$ the images of
$\phi\circ f$ generate again $W$. Hence we have proven that $G$ is
indeed a new symplectic root system. Especially by dimensionality
reasons, the new root system cannot be minimal unless $\phi$ is a symplectic
isomorphism; in which case the symplectic root systems $G$ and $F$ are
isomorphic.

\begin{definition}[Quotient]
  Let $(f,V)$ be a symplectic root system on a graph $\GG$ and
  $\phi:\;V\rightarrow W$ be a {surjective symplectic homomorphism}, then we
  define the
  quotient symplectic root system on the same graph by $(\phi\circ f,W)$. 
  Thereby $\phi$ becomes a homomorphism between the symplectic root
  systems. 
\end{definition}

In view of the universal property and the existence of minimal
symplectic root systems by Corollary \nref{cor_AlwaysHomomorphism} and Lemma
\nref{lmNEW_Existence}) we have altogether: 

\begin{corollary}\label{corNEW_allSRS}
  For every graph $\GG$ there is up to isomorphism a unique minimal symplectic
  root system and all symplectic root system of $\GG$ are quotients thereof.
\end{corollary}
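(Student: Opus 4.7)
My plan is to assemble the corollary directly from the preceding results; no new construction is required. Existence and uniqueness up to isomorphism of a minimal symplectic root system on $\GG$ has already been fully handled: Lemma \nref{lmNEW_Existence} produces one explicitly as $V = \bigoplus_{p\in\GG}\F_2\cdot v_p$ with the symplectic form dictated by the edge relation, and the Universal Property (Corollary \nref{cor_AlwaysHomomorphism}) applied to two minimal symplectic root systems yields mutual homomorphisms, which on a finite graph must be bijections. So the only remaining content of the corollary is that every symplectic root system $(g,W)$ of $\GG$ is a quotient of this minimal one.

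For that, I would let $(f,V)$ be the (up-to-isomorphism unique) minimal symplectic root system on $\GG$. By Corollary \nref{cor_AlwaysHomomorphism} there exists a (unique) homomorphism of symplectic root systems $\phi:(f,V)\to(g,W)$, i.e.\ a linear map $\phi:V\to W$ with $\phi\circ f=g$. Lemma \nref{lm_firstUniversal} then forces $\phi$ to be a surjective symplectic homomorphism: surjective because $g(\GG)=\phi(f(\GG))$ generates $W$, and symplectic because the graph pins the form $\langle f(p),f(q)\rangle_V=\langle g(p),g(q)\rangle_W$ down on generators. Feeding $\phi$ into the quotient construction exhibits $(g,W)$ as $(\phi\circ f,W)$, a quotient of $(f,V)$, which is exactly what the corollary claims.

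I do not expect any obstacle: the combinatorial substance of the statement has already been absorbed, on the one hand into Lemma \nref{lmNEW_Existence} (checking that the naive edge-defined form on the free $\F_2$-vector space on nodes is alternating, which over $\F_2$ is the subtle point), and on the other into Lemma \nref{lm_firstUniversal} (where the symplectic form on $V$ is determined by the graph via the generating property of $f(\GG)$). The corollary itself is pure bookkeeping, chaining Existence $\to$ Universal Property $\to$ automatic surjectivity $\to$ definition of quotient.
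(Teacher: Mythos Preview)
Your proposal is correct and matches the paper's approach exactly: the paper states this corollary as an immediate consequence of the universal property (Corollary \nref{cor_AlwaysHomomorphism}) and the existence lemma (Lemma \nref{lmNEW_Existence}), without spelling out any further details. Your write-up simply makes explicit the chain Existence $\to$ Universal Property $\to$ surjective symplectic homomorphism via Lemma \nref{lm_firstUniversal} $\to$ definition of quotient, which is precisely the intended argument.
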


\section{A Restriction Theorem}\label{sec_Restriction}

\begin{definition}
  Let $F=(f,V)$ be a symplectic root system of a graph $\DD$ and $\GG \subset
  \DD$ the induced subgraph of a subset of vertices of $\DD$. Then we define the
  restriction $F|_{\GG}$ to be $(f|_{\GG},W)$, where $f$ is restricted to $\GG$
  and $W\subset V$ is the subspace generated by the image $f(\GG)$. The
  restriction is clearly a symplectic root system of $\GG$.
\end{definition}

%

\begin{lemma}[Orthogonal Projection]\label{lm_OrthogonalProjection}
  This is a generalization of orthogonal projection to the degenerate case
  (and probably not new):
  Suppose $W\subset V$ to be symplectic vector spaces and suppose a given $v\in
  V$ such that  $v\perp W^\perp$. Then there exists a decomposition $v=v_0+v_W$
  with $v_W\in  W$ and $v_0\perp W$. Moreover, all
  such decompositions are of the form $v=(v_0+t)+(v_W-t)$ with some $t\in
  W^\perp$.
\end{lemma}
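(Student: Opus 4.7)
The plan is to reduce the statement to a standard fact about linear functionals on a symplectic space modulo its radical. Consider the linear functional $\ell:W\to\F_2$ defined by $\ell(w)=\langle v,w\rangle$. Finding $v_W\in W$ with $v-v_W\perp W$ amounts to finding $v_W\in W$ such that $\ell(w)=\langle v_W,w\rangle$ for all $w\in W$, i.e., representing $\ell$ via the restricted symplectic form on $W$.

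First I would analyze the canonical map $\phi\colon W\to W^*$, $w\mapsto \langle w,\cdot\rangle|_W$. Its kernel is by definition the radical $W^\perp$, so $\phi$ descends to an injection $W/W^\perp\hookrightarrow W^*$; a dimension count gives $\dim\mathrm{Im}(\phi)=\dim W-\dim W^\perp$. Since every element of the image annihilates $W^\perp$ (as $\langle W,W^\perp\rangle=0$) and the annihilator of $W^\perp$ in $W^*$ has precisely that dimension, $\mathrm{Im}(\phi)$ coincides with the annihilator of $W^\perp$ in $W^*$. Consequently, a functional on $W$ is of the form $\langle w',\cdot\rangle|_W$ for some $w'\in W$ if and only if it vanishes on $W^\perp$.

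Next, I would apply this criterion to $\ell$: the hypothesis $v\perp W^\perp$ says exactly $\ell(W^\perp)=0$, so $\ell=\phi(v_W)$ for some $v_W\in W$. Setting $v_0:=v-v_W$ then gives $v=v_0+v_W$ with $v_W\in W$ and $\langle v_0,w\rangle=\ell(w)-\langle v_W,w\rangle=0$ for all $w\in W$, proving existence.

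For the uniqueness statement, suppose $v=v_0+v_W=v_0'+v_W'$ are two such decompositions and set $t:=v_W'-v_W=v_0-v_0'$. From the first equality $t\in W$, while from the second, for every $w\in W$, $\langle t,w\rangle=\langle v_0-v_0',w\rangle=0$, so $t\in W^\perp$. Thus the two decompositions differ by $v_W\mapsto v_W+t$, $v_0\mapsto v_0-t$ with $t\in W^\perp$, as claimed. The only non-routine ingredient is the identification $\mathrm{Im}(\phi)=\mathrm{Ann}(W^\perp)$, which is the direct analogue of the usual nondegenerate case; everything else is bookkeeping.
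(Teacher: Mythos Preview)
Your proof is correct. The paper takes a more explicit route: it fixes a symplectic basis $x_i,y_i,z_j$ of $W$ (Theorem \nref{thm_SymplecticBasis}), writes down the candidate
\[
  v_W:=\sum_k\bigl(\langle v,y_k\rangle\,x_k-\langle v,x_k\rangle\,y_k\bigr),
\]
and then checks by direct computation on this basis that $v-v_W$ is orthogonal to all $x_i,y_i$, while orthogonality to the $z_j$ is exactly the hypothesis $v\perp W^\perp$. Your argument instead characterizes the image of $W\to W^*$, $w\mapsto\langle w,\cdot\rangle|_W$, as the annihilator of $W^\perp$ via a dimension count, and observes that the hypothesis places $\ell=\langle v,\cdot\rangle|_W$ in that image. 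Both approaches prove the same thing; the paper's is constructive and gives an explicit projection formula, while yours is coordinate-free and makes transparent that the condition $v\perp W^\perp$ is precisely the obstruction to representability. The uniqueness paragraph is essentially identical in both proofs.
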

\begin{proof}
  Denote by $x_i,y_i,z_j$ any symplectic basis of $W$ (Theorem
  \nref{thm_SymplecticBasis}) and define
  $$v_W:=\sum_k \left(\langle v,y_k \rangle x_k
  -\langle v,x_k \rangle y_k\right)$$
  Then as claimed $v_W\in W$ by construction. Moreover we define 
  $v_0:=v-v_W$ and check on the  symplectic basis that as asserted 
  $v_0\perp W$:	
  \begin{align*}
    \langle v_0,x_i\rangle
      &=\langle v-v_W,x_i\rangle\\
      &=\langle v,x_i\rangle 
	- \sum_k \left(\langle v,y_k \rangle \langle x_k,x_i\rangle 
	-\langle v,x_k \rangle\langle y_k,x_i\rangle \right)\\
      &=\langle v,x_i\rangle 
	+ \langle v,x_i \rangle\langle y_i,x_i\rangle=0
  \end{align*}
  The same calculation proves $v_0\perp y_i$. Finally we have by construction
  $v_0\in v+W$ and hence $v_0\perp W^\perp$ by the additional assumption
  $v\perp W^\perp$.\\

  For the last claim, assume that $v=v_0+v_W=v_0'+v_W'$
  with $v_0,v_0'\in W$ and $v_W,v_W'\perp W$. Then $t:=v_0'-v_0=-(v_W'-v_W)$ is
  contained in $W$ and $t\perp W$, hence $t\in W^\perp$.
\end{proof}

\begin{theorem}[Restriction]\label{thm_Restriction}
  Let $F=(f,V)$ be a symplectic root system of type $(n,k)$ on a graph $\DD$, 
  let $p\in\DD$ be any node and denote by $\GG$ the induced subgraph on $\DD-p$.
  Then the restricted symplectic root system
  $F|_{\GG}=(f|_{\GG},W)$ is either
  \begin{enumerate}
    \item of same type $(n,k)$ and $F$ was not minimal
    \item of type $(n,k-1)$, especially $F$ was not extraspecial
    \item of type $(n-1,k+1)$, especially $F|_{\GG}$ is not extraspecial
  \end{enumerate}
\end{theorem}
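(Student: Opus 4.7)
The plan is to trichotomize according to the position of $f(p)$ with respect to the restricted space $W$ and, when $f(p) \notin W$, also with respect to the radical $W^\perp$. If $f(p) \in W$, then $V = W$, so $F|_\GG$ has the same type $(n,k)$ as $F$, and the decorations $\{f(q)\}_{q \in \DD}$ are linearly dependent, showing $F$ is not minimal; this is case (1). Otherwise $\dim V = \dim W + 1$, and writing the type of $W$ as $(n',k')$, the identity $2n+k = 2n'+k'+1$ forces $k$ and $k'$ to have opposite parities, so it suffices to show $|k-k'|=1$ and to identify the sign via a case split on whether $f(p) \perp W^\perp$.

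If $f(p) \perp W^\perp$, the Orthogonal Projection Lemma \nref{lm_OrthogonalProjection} yields a decomposition $f(p) = v_0 + v_W$ with $v_W \in W$ and $v_0 \perp W$. A short check using $\langle v_0, v_0\rangle = 0$ gives $\langle v_0, f(p)\rangle = \langle v_0, v_W\rangle + \langle v_0, v_0\rangle = 0$, so $v_0 \in V^\perp$; moreover $v_0 \notin W$ since $f(p) \notin W$. Adjoining $v_0$ as a new nullvector to any symplectic basis of $W$ yields a symplectic basis of $V$, so $V$ has type $(n', k'+1)$. This matches case (2), and $k = k'+1 \geq 1$ shows that $F$ is not extraspecial.

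If $f(p) \not\perp W^\perp$, I fix a symplectic basis $\{x_i, y_i, z_j\}$ of $W$ arranged so that $\langle f(p), z_1\rangle = 1$. First I strip $f(p)$ of its hyperbolic coordinates, setting $u' = f(p) + \sum_i \bigl(\langle f(p), y_i\rangle x_i + \langle f(p), x_i\rangle y_i\bigr)$; the calculation from the proof of Lemma \nref{lm_OrthogonalProjection} shows that $u' \perp x_i, y_i$ for all $i$, while $\langle u', z_1\rangle = 1$ still, so $(z_1, u')$ is a new hyperbolic plane. I then replace each remaining $z_j$ ($j \geq 2$) by $z_j + \langle u', z_j\rangle z_1$, which restores orthogonality with $u'$ without disturbing any other pairing. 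The resulting symplectic basis of $V$ has one more hyperbolic plane and one fewer nullvector than the basis of $W$, so $V$ has type $(n'+1, k'-1)$; this matches case (3), and $k' = k+1 \geq 1$ shows $F|_\GG$ is not extraspecial.

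I expect the main obstacle to be the simultaneous adjustment of $f(p)$ \emph{and} the remaining nullvectors in case (3): both corrections are needed to land at a genuine symplectic basis of $V$ rather than merely an orthogonal decomposition, and it is precisely the replacement $z_j \mapsto z_j + \langle u', z_j\rangle z_1$ that guarantees exactly one nullvector is traded for one hyperbolic plane, with the rest of the symplectic structure of $W$ carried across untouched.
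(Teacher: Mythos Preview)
Your proof is correct, and it is more constructive than the paper's. The paper dispenses with the subcases $f(p)\perp W^\perp$ versus $f(p)\not\perp W^\perp$ almost entirely: after handling $V=W$ as you do, it simply notes that the $n'$ hyperbolic planes of $W$ persist in $V$ (so $n\geq n'$), that the kernel of $z\mapsto\langle f(p),z\rangle$ on $W^\perp$ lies in $V^\perp$ (so $k\geq k'-1$), and then solves $2n+k=2n'+k'+1$ under these two inequalities to force $(n',k')\in\{(n,k-1),(n-1,k+1)\}$. No explicit basis of $V$ is ever produced.

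Your argument, by contrast, actually constructs a symplectic basis of $V$ in each subcase and thereby pins down \emph{which} of cases (2) and (3) occurs in terms of the single condition $f(p)\perp W^\perp$. This extra information is not part of the theorem statement, but it is exactly what the paper needs later: your two constructions are essentially the inverses of the extension procedures in Theorems \nref{thm_ExtensionExtraspecial} and \nref{thm_ExtensionNullspace}, and the correction $z_j\mapsto z_j+\langle u',z_j\rangle z_1$ you single out reappears (in dual form) in the proof of Theorem \nref{thm_Extension}. So the paper's route is shorter for this theorem in isolation, while yours front-loads work that the paper performs anyway in Section~\nref{sec_Extension}.
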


\begin{proof}
 Consider the subspace $W\subset V$ generated by the image $f(\GG)$.
  Either of the following cases applies:
\begin{enumerate}
  \item $V=W$, then $F|_{\GG}$ has the same type as $F$. Further, $F$ could
    not have been minimal, because the decoration $f(p)\in W$ was linear
    dependent.
  \item $V=f(p)\F_2\oplus W$. We denote by $(n',k')$ the type of $W$ and aim to
    determine the two remaining cases as $k'=k\pm 1$. Then $V$ contains at
    least $n'$ mutually orthogonal hyperbolic planes, hence $n\geq n'$, and the
    nullspace is $\dim(V^\perp)\geq k'-1$ (it depends on whether $f(p)\perp
    W^\perp$). This implies by $\dim(V)=2\cdot n+k$ and
    $\dim(W)=2\cdot n'+k'$ that the type $(n',k')$ is as claimed.
%
%
%
\end{enumerate}
\end{proof}

\begin{corollary} 
 For $\gamma$ the size of a maximal coclique $\Gamma$ in a graph $\GG$
  and $(f,V)$ a symplectic root system of $\GG$ of type $(n,k)$ we get the
following bound:
  $$n\leq |\GG-\Gamma|=|\GG|-\gamma$$
\end{corollary}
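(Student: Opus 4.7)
The plan is to apply the Restriction Theorem iteratively along a path from $\DD=\GG$ down to the induced subgraph on a maximum coclique $\Gamma$, and exploit that the restriction of $F$ to $\Gamma$ must consist of pure nullspace.

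First I would observe that if $\Gamma\subseteq\GG$ is a coclique, then for all $p,q\in\Gamma$ we have $\langle f(p),f(q)\rangle=0$: either $p=q$, in which case alternation gives $0$, or $p\neq q$, which by the definition of a symplectic root system forces orthogonality since $p$ and $q$ are non-adjacent in the induced subgraph on $\Gamma$. Let $W_\Gamma:=\langle f(\Gamma)\rangle$ be the subspace generated by these decorations; then $W_\Gamma\subseteq W_\Gamma^\perp$, so the restricted symplectic root system $F|_\Gamma=(f|_\Gamma,W_\Gamma)$ has conullity $0$, i.e.\ is of type $(0,k')$ for some $k'\geq 0$.

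Next I would enumerate the vertices of $\GG-\Gamma$ as $p_1,\dots,p_m$ with $m=|\GG|-\gamma$, and successively restrict $F$ to the induced subgraphs obtained by deleting these nodes one at a time. Write $(n_i,k_i)$ for the type of the restriction after removing $p_1,\dots,p_i$, with $(n_0,k_0)=(n,k)$ and $(n_m,k_m)=(0,k')$. At each step the Restriction Theorem \nref{thm_Restriction} gives only three possibilities: $(n_i,k_i)=(n_{i-1},k_{i-1})$, or $(n_{i-1},k_{i-1}-1)$, or $(n_{i-1}-1,k_{i-1}+1)$. In all three cases $n_i\geq n_{i-1}-1$, so by induction $n_m\geq n-m$.

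Combining $n_m=0$ with $n_m\geq n-(|\GG|-\gamma)$ yields the claimed bound $n\leq |\GG|-\gamma$. The only subtle point is verifying that at each step the Restriction Theorem is applicable, which is immediate since each intermediate object is still a symplectic root system on the corresponding induced subgraph by the definition of restriction; no obstacle arises beyond bookkeeping the conullity drop across the three cases.
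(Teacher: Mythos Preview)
Your proof is correct and follows essentially the same approach as the paper: both iteratively apply the Restriction Theorem \nref{thm_Restriction} to remove the vertices of $\GG-\Gamma$ one at a time, using that the conullity can drop by at most $1$ per step and that the restriction to the coclique $\Gamma$ has conullity $0$. The paper phrases this as an induction on $|\GG-\Gamma|$ (checking that $\Gamma$ remains a maximal coclique in each subgraph), whereas you carry out the iteration directly and only use that $\Gamma$ is a coclique, which is a minor and arguably cleaner variant of the same argument.
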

\begin{proof}
  We perform induction along $|\GG-\Gamma|$. For $\GG=\Gamma$ the graph is
  totally disconnected, hence obviously (see Lemma \nref{lm_Disconnected})
  any symplectic root system is of type $n=0$ and the bound holds with
  equality.\\

  Now we proceed with the induction step: Suppose $(f,V)$ to be a symplectic
  root system on some graph
  $\GG$ with a maximal coclique $\Gamma\subsetneq\GG$. Choose any $p\in
  \GG-\Gamma$ and consider the restriction $(f|_{\GG-p},W)$ of type $(n',k')$.
  By induction, 
  $$n'\leq |\GG-p|-\gamma'= |\GG|-1-\gamma$$
  because $\Gamma$ is also a coclique in $\GG-p$, which is maximal, hence
  $\gamma'= \gamma$. Now by Theorem \nref{thm_Restriction}
  we have $n=n'$ or $n=n'+1$, hence $n\leq n'+1$ and as asserted
  $$n\leq n'+1\leq |\GG|-1-\gamma +1=|\GG|-\gamma$$
\end{proof}

Note that the bound in the corollary is met for all Cartan type graphs in
Section \nref{sec_CartanType} (even $D_{2n}$). On the other
hand, the complete graphs
$\GG=\mathcal{K}_N$ provide examples with $\gamma=1$ (the maximal possible value
on the right hand side) but still (almost-) extraspecial $n\approx|\GG|/2$ --
the minimum possible values on the left hand side.

\section{An Extension Theorem}\label{sec_Extension}

\begin{definition}
  Let $\DD$ be a graph and $\GG\subset \DD$ a subgraph induced by a
  subset of vertices in $\DD$. A symplectic root system $F=(f,V)$ on
  $\DD$ is called {extension} of a given symplectic root system $G=(g,W)$ on
  $\GG$, if the restriction $F|_{\GG}=G$.
\end{definition}

\subsection{Extending Extraspecial Symplectic Root Systems}
\label{sec_extendingExtraspecial}

\begin{theorem}[Extension of Extraspecials]\label{thm_ExtensionExtraspecial}
  Let $\DD$ be a graph, $p\in\DD$ a node and $G=(g,W)$ a
  {minimal extraspecial} symplectic root system of $\GG:=\DD-p$. Then there
  exists an almost extraspecial minimal extension $F=(f,V)$ of $G$ to
  $\DD$.
\end{theorem}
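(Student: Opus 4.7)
My plan is to adjoin a single new nullvector to $W$ and to define $f(p)$ as a correction of a suitable element of $W$ by this nullvector, chosen so that the symplectic pairings of $f(p)$ with the old decorations record the adjacencies of $p$ in $\DD$. First, I would encode the neighbourhood of $p$ by the indicator function $\lambda\colon \GG \to \F_2$ with $\lambda(q) = 1$ iff $q$ is adjacent to $p$ in $\DD$. Because $G$ is minimal, $\{g(q)\}_{q\in\GG}$ is a basis of $W$, so $\lambda$ extends uniquely to a linear form $\tilde\lambda\colon W \to \F_2$ via $\tilde\lambda(g(q)) := \lambda(q)$. This is the one place the minimality hypothesis enters.

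Next, I would exploit the extraspecial hypothesis $W^\perp = 0$: the map $W \to W^*$, $w \mapsto \langle w,\cdot\rangle$, is then a linear isomorphism, so there is a unique $w_0 \in W$ satisfying $\tilde\lambda(v) = \langle w_0, v\rangle$ for all $v \in W$. I would then set $V := W \oplus \F_2 z$ for a formal new basis vector $z$ declared orthogonal to all of $W$ and to itself, and extend $\langle,\rangle$ bilinearly; this form is alternating on $V$ since it vanishes on the diagonal of $W$ and on $z$. Finally, define $f|_{\GG} := g$ and $f(p) := w_0 + z$.

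The remaining verifications are routine. For $q \in \GG$ one has $\langle f(p), g(q)\rangle = \langle w_0, g(q)\rangle = \lambda(q)$, which records adjacency correctly, while pairings among $g(\GG)$ and $\langle f(p), f(p)\rangle$ are immediate; the $z$-component of $f(p)$ equals $1$, so $f(p)\notin W$ and $\{f(\alpha)\}_{\alpha\in\DD}$ is a basis of $V$, witnessing both generation and minimality; and any $v = w + az \in V^\perp$ must pair trivially with $W$, forcing $w\in W^\perp = 0$ and hence $V^\perp = \F_2 z$, so $V$ is almost extraspecial of type $(n,1)$ with $2n = \dim W$. The main obstacle is conceptual rather than computational: one must recognise that minimality of $G$ is exactly what promotes the combinatorial datum $\lambda$ to a linear form, and that nondegeneracy of $W$ is exactly what realises every linear form as pairing against some $w_0 \in W$. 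Once this is seen, the adjunction of $z$ is forced, since otherwise $f(p) = w_0 \in W$ would violate minimality.
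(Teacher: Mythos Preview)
Your proof is correct and follows essentially the same approach as the paper: construct the indicator function $\lambda$, use minimality to linearize it to $\tilde\lambda$, use nondegeneracy to realize $\tilde\lambda$ as $\langle w_0,\cdot\rangle$, then set $V=W\oplus^\perp z\F_2$ and $f(p)=w_0+z$. If anything, you are slightly more thorough in explicitly verifying that $V^\perp=\F_2 z$ and that the extended form is alternating, which the paper leaves implicit.
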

\begin{proof}
  To construct $F$, take $V=z\F_2\oplus^\perp W$, which is an almost
  extraspecial symplectic vector space, and $f|_W=g$ extending $G$ as demanded.
  To choose the new decoration $f(p)$, we consider the
  indicator function of the neighbourhood of $p$ in $\GG$:
  $$\lambda:\;\GG\rightarrow \F_2 \qquad 
    \forall_{q\in \GG}\;\lambda(q)=1\;:\Leftrightarrow\; pq\in Edges(\DD)$$
  Because $G$ was assumed minimal i.e. $g(\GG)$ is a basis of $W$, there is a
  unique linear form $\tilde{\lambda}$ on $W$ such that
  $$\tilde{\lambda}:\;W\rightarrow \F_2\qquad
    \forall_{q\in \GG}\;\lambda(q)=\tilde{\lambda}(g(q))$$
  Because we assumed $G$ extraspecial, i.e. $W$ nondegenerate, there is a
  unique element 
  $$w_0\in W\qquad 
    \forall_{w\in W}\;\langle w_0,w\rangle=\tilde{\lambda}(w)$$
  The choice $f(p):=w_0+z$ then turns $F=(f,V)$ into a symplectic
  root system, as
  $$\forall_{q\in \GG}\;\langle f(p),f(q)\rangle=\langle w_0,g(q)\rangle
    =\tilde{\lambda}(g(q))=\lambda(q)$$
  while for elements $q,q'\in \GG$ the condition was already satisfied in $G$.
  Moreover, as the $f(q)=g(q)$ for $q\in\GG$ already formed a basis of $W$ by
  assumption of minimality, together with $f(p)\in z+W$ we get a basis of $V$.
  Hence we constructed an almost extraspecial minimal symplectic root system
  on $\DD$. 
\end{proof}

\subsection{Extending Nullspace Symplectic 
Root Systems}\label{sec_extendingNullspace}

The proof of the last extension theorem crucially relies on the
nondegeneracy of the symplectic form. However, for a degenerate
vector space $W$ of type $(n,k)$, generally only few edge-configurations to the
new point $p$ can be constructed by simply using an existing vector in $w_0\in
W$ together with a new nullvector $z$ as above.\\

 This can be seen already by
counting, as there are $|W/W^\perp|=2^{2n}$ possible $w_0$ in contrast to
$2^{2n+k}$ possible edge configurations. Note that these cases correspond to the
restriction case 2 in Theorem \nref{thm_Restriction}.
The remaining $2^{2n}\left(2^k-1\right)$ configurations require, in addition to
the choice of an element $w_0$, a new symplectic base-pair in $V$
from a former nullvector in $W$ causing additional new edges -- corresponding
to the restriction case 3 in Theorem \nref{thm_Restriction}.\\

Before turning to the question of extending an arbitrary minimal symplectic root
system, let us look at the opposite extreme case: The extension of a symplectic
root system of type $(0,k)$, i.e. on a pure nullspace $W=W^\perp$. Because of
the Restriction Theorem \nref{thm_Restriction}, the extension $V$ can only have
type $(0,k+1)$ (=new nullvector) or $(1,k-1)$ (=new hyperbolic plane). We
show that both cases can be constructed depending on the aspired
graph extension. The following proof models similarly to the proof of Theorem
\nref{thm_ExtensionExtraspecial} the indicator function $\lambda$ of a
neighbourhood of $p$ by a linear form $\tilde{\lambda}$ via an artificially
chosen nondegenerate symmetric bilinear form on $W^\perp$. Then in the
nontrivial
case $\tilde{\lambda}\neq 0$ one constructs $V$ by extending the kernel of
$\tilde{\lambda}$ by a new hyperbolic plane $H_1=x_1\k\oplus y_1\k$.

\begin{theorem}[Extension of Nullspaces]\label{thm_ExtensionNullspace}
  Let $G=(g,W)$ be a minimal symplectic root system of type $(0,k)$ on
  a graph $\GG$, i.e. $W=W^\perp$ a pure nullspace. Let
  $\DD$ be a graph with
  $p\in\DD$ a node, such that $\GG:=\DD-p$. Then there exists a minimal
  extension $F=(f,V)$ of $G$ to $\DD$ of
  either one of the following types:
  \begin{itemize}
    \item $(0,k+1)$, if $\DD$ is completely disconnected.
    \item $(1,k-1)$, if $\DD$ contains at least one edge.
  \end{itemize}
  The extension $F$ is unique up to isomorphism by Corollary
  \nref{corNEW_allSRS}.
\end{theorem}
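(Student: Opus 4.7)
The plan is to follow the strategy of Theorem \nref{thm_ExtensionExtraspecial}, adapted to the pure-nullspace setting where the symplectic form on $W$ vanishes identically. First I would observe that the hypothesis $W = W^\perp$ together with the symplectic root system axiom forces $\GG$ to be totally disconnected: for any two $q, q' \in \GG$ one has $\langle g(q), g(q')\rangle = 0$, so no edges can occur in $\GG$. Minimality of $G$ then gives that $\{g(q)\}_{q\in\GG}$ is a basis of $W$, and hence there is a unique linear form $\tilde\lambda : W \to \F_2$ satisfying $\tilde\lambda(g(q)) = \lambda(q)$ for all $q \in \GG$, where $\lambda$ is the indicator function of the neighborhood of $p$ in $\DD$.

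I would then split according to whether $\tilde\lambda = 0$ or not, which corresponds precisely to the dichotomy in the statement. If $\tilde\lambda = 0$, then $p$ has no neighbors in $\GG$, and combined with the previous observation this forces $\DD$ to be totally disconnected. I set $V := W \oplus z\F_2$ with $z$ a new nullvector (so $\langle z, W\rangle = 0$ and $\langle z, z\rangle = 0$) and define $f(p) = z$. Then $V$ is still a pure nullspace, giving type $(0, k+1)$, and the root system axioms hold trivially.

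In the nontrivial case $\tilde\lambda \neq 0$, I enlarge $W$ by a new basis vector $y$ and extend the symplectic form by declaring $\langle y, w\rangle := \tilde\lambda(w)$ for $w \in W$ and $\langle y, y\rangle := 0$. A direct computation using the vanishing of the form on $W$ shows that the extended form is still alternating over $\F_2$. Setting $f(p) := y$, the adjacency condition is equivalent to $\tilde\lambda(g(q)) = \lambda(q)$, which holds by construction. To determine the type of $V$, I compute $V^\perp$: writing a general element as $w + \alpha y$ and testing orthogonality against both $W$ and $y$, the conditions collapse to $\tilde\lambda(w) = 0$ together with $\alpha \tilde\lambda(\cdot) \equiv 0$; since $\tilde\lambda \neq 0$ this forces $\alpha = 0$, giving $V^\perp = \ker(\tilde\lambda)$, a subspace of dimension $k - 1$. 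With $\dim V = k + 1$ this yields type $(1, k-1)$, consistent with the third case of the Restriction Theorem \nref{thm_Restriction}. An explicit hyperbolic plane is exhibited by $y$ together with any $x \in W$ satisfying $\tilde\lambda(x) = 1$, so that $W = \ker(\tilde\lambda) \oplus x\F_2$ as advertised in the section introduction.

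Minimality of $F$ is immediate since $\{g(q)\}_{q\in\GG} \cup \{f(p)\}$ is a basis of $V$ in both cases, and uniqueness up to isomorphism is handed to us by Corollary \nref{corNEW_allSRS}. The only genuine subtlety lies in Case 2, where one must verify that the recipe $\langle y, w\rangle := \tilde\lambda(w)$ really defines an alternating form on the enlarged space (this uses critically that the form on $W$ was identically zero, so the ``cross terms'' cannot spoil $\langle v,v\rangle = 0$); once this is checked, the identification $V^\perp = \ker(\tilde\lambda)$ and thus the type $(1, k-1)$ follow mechanically.
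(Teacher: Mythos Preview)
Your proof is correct and follows essentially the same approach as the paper. The only cosmetic difference is the order of operations in Case~2: the paper first chooses an $x\in W$ with $\tilde\lambda(x)=1$ and defines $V$ via the symplectic basis decomposition $\ker(\tilde\lambda)\oplus^\perp(x\k\oplus y\k)$, whereas you define the extended form intrinsically by $\langle y,w\rangle:=\tilde\lambda(w)$ and recover the hyperbolic plane afterward; both yield the same $(V,f)$. (One minor remark: the vanishing of the cross terms in your alternating-form check comes from characteristic~$2$ symmetry, not specifically from the form on $W$ being identically zero.)
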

Note that in this Theorem the graph
$\GG$ has to be totally disconnected by definition and $\DD$ is a star
graph with center $p$ connected to a subset of $\GG$, the neighbourhood of
$p\in\DD$. The first case of the Theorem is then if the neighbourhood of $p$ in
$\DD$ is empty.
\begin{proof}
  The first case is a trivial construction, so let's turn to the second case:
  In order to produce edges at all (see Example \nref{exm_A2}), $V$ of
  dimension $1+k$ cannot be a
  pure nullspace and thus of type $(n,1+k-2n)$ for $n>0$. The vice-versa
  restriction to $\GG=\DD-p$ is the assumed $G$ of type $(0,k)$, so by
  Theorem \nref{thm_Restriction} only $n=1$ an hence type $(1,k-1)$ remains:
  This is case 2 in the cited Restriction Theorem: $V$ possesses one
  hyperbolic plane $H_1=x\k\oplus y\k$, that is separated by the restriction to
  $\GG$ and $W$.\\

  We may explicitly construct such a $V$ as follows: Consider again  the
  indicator function $\lambda$ of the neighbourhood of $p$  in $\DD$ (such a 
  neighbourhood is subset of $\GG$):
  $$\lambda:\;\GG\rightarrow \F_2 \qquad 
    \forall_{q\in \GG}\;\lambda(q)=1\;:\Leftrightarrow\; pq\in Edges(\DD)$$
  By assumption of the second case of the assertion, $\DD$ not totally
  disconnected, so $\lambda\neq 0$. Because $G$ was assumed minimal i.e.
  $g(\GG)$ is a basis of $W$, there is a unique linear form $\tilde{\lambda}\neq
  0$ on $W$ such that
  $$\tilde{\lambda}:\;W\rightarrow \F_2\qquad
    \forall_{q\in \GG}\;\lambda(q)=\tilde{\lambda}(g(q))$$
  Choose an element $x\in W$ with $\tilde{\lambda}(x)=1_{\F_2}$, which is
  possible by $\tilde{\lambda}\neq 0$. Such an $x$ can be used to project
  $W\rightarrow \ker(\tilde{\lambda})$ by $w\mapsto
  w-\tilde{\lambda}(w)\cdot x$. \\

  With these preperations we define:
  \begin{align*}
    V
    &:= \underbrace{\ker\left(\tilde{\lambda}\right)}_{V^\perp}\oplus^\perp
      \left( x\k\oplus y\k\right)=W\oplus y\k
    \qquad \langle x,y\rangle:=1\\
    f(p)
    &:=y\\
    f(q)
    &:=g(q)\\
    &:=\underbrace{g(q)-\tilde{\lambda}\left(g(q)\right)\cdot x}
      _{\in \ker\left(\tilde{\lambda}\right)}
      +\tilde{\lambda}\left(g(q)\right)\cdot x
    \qquad \forall q\in \GG
  \end{align*}
  Note that the vice-versa restriction to $\GG$ is equal to $G$.
  The pair $(f,V)$ defined this way satisfies the axioms of a symplectic
  root system. This can be seen for node pairs $p,q$ involving the new point $p$
  by
  \begin{align*}
    \forall_{q\in \GG}\;\langle f(p),f(q)\rangle
    &=\langle y,\underbrace{g(q)-\tilde{\lambda}\left(g(q)\right)\cdot x}
      _{\in \ker\left(\tilde{\lambda}\right)\subset V^\perp}
      +\tilde{\lambda}\left(g(q)\right)\cdot x\rangle\\
    &=\langle y,\tilde{\lambda}\left(g(q)\right)\cdot x\rangle\\
    &=\tilde{\lambda}\left(g(q)\right)=\lambda(q) 
  \end{align*}
  while for node pairs $q,q'\in \GG$ the condition was already satisfied in $G$.
  To show that the new decorations form a basis, note first the $g(q)$ for
  $q\in\GG$ already formed a basis of $W$ by assumption of minimality and the
  new decoration $f(p):=y$ is linearly independent.
\end{proof}

\subsection{Extending Arbitrary Symplectic 
Root Systems}\label{sec_extendingArbitrary}

Quite surprisingly, a uniform treatment of the two extremal cases
(nondegenerate and nullspace) is possible and yields an equally strong statement
as in the cases above. For this, one has to introduce an artificial
non-symplectic nondegenerate bilinear form.

\begin{definition}[Mixed Completion]\label{def_MixedCompletion}
  Let $W,\;\langle,\rangle$ be a symplectic vector space over $\k$ and choose
  $\pi:\;W\rightarrow W^\perp$ a fixed projection to the nullspace and $(,)$ a
  nondegenerate symmetric bilinear form on the vector space $W^\perp$.
  We define the {mixed completion} $\langle\langle,\rangle\rangle: W\times
  W\to \k$ by
  $$\langle\langle v,w\rangle\rangle
  :=\langle v,w\rangle+\left(\pi(v),\pi(w)\right)$$
\end{definition}
\begin{lemma}\label{lm_MixedCompletion}
 The mixed completion is a nondegenerate bilinear form on $W$.
\end{lemma}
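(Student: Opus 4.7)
The plan is straightforward once one unpacks the definition. Bilinearity is immediate: the assignment $\langle\langle v,w\rangle\rangle = \langle v,w\rangle + (\pi(v),\pi(w))$ is a sum of two bilinear forms, since $\pi$ is linear and both $\langle,\rangle$ and $(,)$ are bilinear. So the real content is nondegeneracy.

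To prove nondegeneracy, I would assume $v\in W$ satisfies $\langle\langle v,w\rangle\rangle=0$ for all $w\in W$ and show $v=0$ in two stages. First I would test against vectors $w\in W^\perp$. Such $w$ satisfy $\langle v,w\rangle=0$ automatically (by definition of $W^\perp$), and $\pi(w)=w$ (since $\pi$ is a projection onto $W^\perp$ and therefore restricts to the identity on $W^\perp$). Hence the hypothesis collapses to $(\pi(v),w)=0$ for all $w\in W^\perp$, and the assumed nondegeneracy of $(,)$ on $W^\perp$ forces $\pi(v)=0$.

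Having established $\pi(v)=0$, the mixed completion reduces to the original form: $\langle\langle v,w\rangle\rangle=\langle v,w\rangle$ for every $w\in W$. The hypothesis now reads $\langle v,w\rangle=0$ for all $w\in W$, so $v\in W^\perp$. But any vector in $W^\perp$ is fixed by $\pi$, so $v=\pi(v)=0$.

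The only step that requires mild care is the compatibility between $\pi$ and $W^\perp$, specifically that $\pi|_{W^\perp}=\mathrm{id}_{W^\perp}$ and that this makes the two-stage argument interact correctly; but this is built into the word ``projection.'' No further obstacle arises, so the proof is essentially a one-paragraph verification.
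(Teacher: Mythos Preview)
Your proposal is correct and follows essentially the same two-stage argument as the paper: first test against $w\in W^\perp$ to deduce $\pi(v)=0$ from the nondegeneracy of $(,)$, then conclude $v\in W^\perp$ from the symplectic form, and finally use that $\pi$ is a projection to get $v=0$. The only addition is your explicit mention of bilinearity, which the paper omits as evident.
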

\begin{proof}
  Suppose some $v\in W$ to be in the radical, i.e. 
  $$\forall_{w\in W}\;\langle\langle v,w\rangle\rangle=
    \langle v,w\rangle+\left(\pi(v),\pi(w)\right)=0$$
  Suppose first $w\in W^\perp$, then the left (symplectic) term vanishes, hence
  for the entire term to be $0$, the second $(,\pi(w))$-term has to vansh as
  well. Since $\pi|_{W^\perp}$ is the identity and $(,)$ is
  nondegenerate,
  we deduce $\pi(v)=0$ i.e. $v\in \ker(\pi)$. But then already for all $w\in W$
  $\left(\pi(v),\pi(w)\right)=0$ and hence the assumption
  reduces to:
  $$\forall_{w\in W}\; \langle v,w\rangle=0\qquad \Rightarrow v\in W^\perp$$
  Because $\pi$ was a projection the two deductions $v\in \ker(\pi)$ and $v\in
  W^\perp$ amount to $v=0$. Hence the radical is $\{0\}$ and $\langle\langle
  ,\rangle\rangle$ is indeed nondegenerate as asserted.
\end{proof}

With this tool we combine the techniques for extending extraspecials as well as
nullspaces (Sections \nref{sec_extendingExtraspecial} and
\nref{sec_extendingNullspace}) by effectively writing the new graph as a
symmetric difference of two graphs obtained by the two methods. Thus we achive
a general extension result for arbitrary minimal symplectic root systems:

\begin{theorem}[Minimal Extensions]\label{thm_Extension}
  Let $\DD$ be a graph, $p\in\DD$ a node and $G=(g,W)$ a
  {minimal} symplectic root system of $\GG:=\DD-p$. Then there
  exists a unique minimal extension $F=(f,V)$ of $G$ to $\DD$. 
\end{theorem}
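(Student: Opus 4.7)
The plan is to reduce to the two extension theorems already established (Theorems \nref{thm_ExtensionExtraspecial} and \nref{thm_ExtensionNullspace}) via the mixed completion, which isolates the ``extraspecial'' and ``nullspace'' contributions of the neighbourhood of $p$. Following the setup of those proofs, I would first associate to the indicator function $\lambda:\GG\rightarrow \F_2$ of the neighbourhood of $p$ in $\DD$ the unique linear form $\tilde\lambda:W\rightarrow \F_2$ satisfying $\tilde\lambda\circ g=\lambda$; both existence and uniqueness of $\tilde\lambda$ follow from the minimality of $G$. The uniqueness half of the theorem is then immediate from Corollary \nref{corNEW_allSRS}: any two minimal extensions of $G$ are isomorphic as symplectic root systems on $\DD$, and the intertwining condition together with $f|_\GG = g$ forces such an isomorphism to fix $W$ pointwise and so identify the new decorations at $p$.

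For existence, I would invoke the mixed completion $\langle\langle,\rangle\rangle$ of Definition \nref{def_MixedCompletion}, built from a chosen projection $\pi:W\rightarrow W^\perp$ and a nondegenerate symmetric form $(,)$ on $W^\perp$. By Lemma \nref{lm_MixedCompletion} this form is nondegenerate, so there is a unique $w_0\in W$ with
\[
\tilde\lambda(w)=\langle w_0,w\rangle+(\pi(w_0),\pi(w)).
\]
The two summands exhibit $\lambda$ as a symmetric difference of an ``extraspecial'' neighbourhood (first summand, vanishing on $W^\perp$) and a ``nullspace'' neighbourhood (second summand). The natural case split is then on $\pi(w_0)$, equivalently on whether $\tilde\lambda|_{W^\perp}=0$, aligning with Cases 2 and 3 of the Restriction Theorem \nref{thm_Restriction}.

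If $\pi(w_0)=0$, then $\tilde\lambda$ is already realised by the symplectic form on $W$ alone, and the extraspecial-style construction applies verbatim: set $V:=W\oplus^\perp z\F_2$ with a new nullvector $z$, and $f(p):=w_0+z$, producing type $(n,k+1)$. If $\pi(w_0)\neq 0$, then $\tilde\lambda|_{W^\perp}\neq 0$, and the nullspace-style construction applies: pick $x\in W^\perp$ with $\tilde\lambda(x)=1$ and form $V:=\ker(\tilde\lambda)\oplus^\perp(x\F_2\oplus y\F_2)$ with $\langle x,y\rangle=1$ and $y\perp\ker(\tilde\lambda)$, setting $f(p):=y$, producing type $(n+1,k-1)$. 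In both subcases the verification of the symplectic root system axioms (alternating form, correct pairings at $p$, linear independence of $f(p)$ from $g(\GG)$) proceeds exactly as in the two preceding theorems.

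The main subtlety I anticipate is that the auxiliary data $(\pi,(,))$ defining the mixed completion is far from canonical, so a priori the resulting $V$ could depend on this choice. However, this is harmless: any two extensions produced in this way are minimal symplectic root systems of $\DD$ restricting to $G$, whence Corollary \nref{corNEW_allSRS} again supplies an isomorphism between them fixing $W$ pointwise. Thus the mixed completion serves purely as the device that unifies the two subcases under the single conceptual picture of a symmetric-difference decomposition of the neighbourhood of $p$, without affecting the resulting extension up to isomorphism.
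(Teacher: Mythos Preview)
Your approach is essentially the paper's: introduce the mixed completion, represent $\tilde\lambda$ by a unique element of $W$, split cases according to whether its $W^\perp$-component vanishes, and in each case build $V$ by adjoining either a nullvector or a new hyperbolic partner. Your handling of uniqueness via Corollary~\nref{corNEW_allSRS} is also what the paper leaves implicit.

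There is one mild but genuine simplification in your second case worth recording. The paper decomposes $\tilde\lambda=\tilde\lambda_{W/W^\perp}+\tilde\lambda_{W^\perp}$, chooses $x\in W$ with $\tilde\lambda_{W^\perp}(x)=1$, makes $y$ orthogonal to $\ker(\tilde\lambda_{W^\perp})$, and sets $f(p)=w_0+y$; the two summands of $\lambda$ are realised separately by $w_0$ and by $y$. You instead pick $x\in W^\perp$ with $\tilde\lambda(x)=1$, make $y$ orthogonal to $\ker(\tilde\lambda)$ itself, and set $f(p)=y$. This works because for $q\in\GG$ one has $g(q)=u+\tilde\lambda(g(q))x$ with $u\in\ker(\tilde\lambda)$, whence $\langle y,g(q)\rangle=\tilde\lambda(g(q))=\lambda(q)$ directly, so the $w_0$-correction is unnecessary. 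One small caution: your notation $V=\ker(\tilde\lambda)\oplus^\perp(x\F_2\oplus y\F_2)$ should not be read as saying $\ker(\tilde\lambda)=V^\perp$; the first summand still carries the nontrivial symplectic form inherited from $W$, and the actual nullspace is $V^\perp=\ker(\tilde\lambda)\cap W^\perp$, of dimension $k-1$ as you claim. The paper's analogous underbrace label has the same issue.
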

\begin{remark}
  Restriction (Theorem \nref{thm_Restriction}) shows, that if $G$ is of type
  $(n',k')$, then $F$ is either of type $(n,k)=(n',k'+1)$, which case is similar
  to extraspecial extension Theorem \nref{thm_ExtensionExtraspecial}, or
  $(n,k)=(n'+1,k'-1)$, which combines the approach with a new symplectic
  basepair as in Theorem \nref{thm_ExtensionNullspace}.\\

  Which case applies and which
  precise decoration the new point receives can generally only be decided along
  the steps of the proof below. However, uniqueness opens the path to directly
  write down an extended symplectic root system. Also, the case of a double
  extension of an extraspecial symplectic root system allows for a direct
  treatment; see Lemma \nref{lm_DoubleExtensionExtraspecial}.
\end{remark}
\begin{proof}
  Choose a fixed projection to the nullspace $\pi:\;W\rightarrow W^\perp$,
  a nondegenerate symmetric bilinear form $(,)$ on the vector
  space $W^\perp$ and a  mixed completion of the
  symplectic form on $W$ (Definition \nref{def_MixedCompletion}), which is
  nondegenerate  by Lemma \nref{lm_MixedCompletion}.
    $$\langle\langle v,w\rangle\rangle
  =\langle v,w\rangle+\left(\pi(v),\pi(w)\right)$$
  The proof proceeds at first largly along the Extraspecial Extension Theorem
  \nref{thm_ExtensionExtraspecial}:
  \begin{itemize}
    \item Consider the
      indicator function of the neighbourhood of $p$ ($\subset\GG$):
      $$\lambda:\;\GG\rightarrow \F_2 \qquad 
	\forall_{q\in \GG}\;\lambda(q)=1\;:\Leftrightarrow\; pq\in Edges(\DD)$$
    \item Because $G$ is minimal, again there is a
      unique linear form on $W$ such that
      $$\tilde{\lambda}:\;W\rightarrow \F_2\qquad
	\forall_{q\in \GG}\;\lambda(q)=\tilde{\lambda}(g(q))$$
    \item By nondegeneracy of the mixed completion, we find a unique
      $$\tilde{w}_0\in W\qquad 
	\forall_{w\in W}\;\langle\tilde{w}_0,w\rangle
	=\tilde{\lambda}(\tilde{w})$$
      which we decompose according to the projection $\pi: W\rightarrow
      W^\perp$:
      $$\tilde{w}_0=(\tilde{w}_0-\pi(\tilde{w}_0))+\pi(\tilde{w}_0)
	=:w_0+z_0\in \ker(\pi)\oplus W^\perp$$
  \end{itemize}
      At this point, let us go one step in the proof back by rewriting the
      decomposition      for the linear form $\tilde{\lambda}:W\rightarrow \F_2$
      and even the      indicator function $\lambda:\GG\rightarrow\F_2$:
      \begin{align*}
       \forall_{w\in W}\qquad \tilde{\lambda}(w)
	&=\langle\langle\tilde{w}_0,w\rangle\rangle\\
	&=\langle\langle w_0,w\rangle\rangle
	  +\langle\langle z_0,w\rangle\rangle\\
	&=:\tilde{\lambda}_{W/W^\perp}(w)+\tilde{\lambda}_{W^\perp}(w)\\
	\forall_{q\in \GG}\qquad \lambda(q)
	&=\tilde{\lambda}\left(g(q)\right)\\
	&=\tilde{\lambda}_{W/W^\perp}\left(g(q)\right)
	  +\tilde{\lambda}_{W^\perp}\left(g(q)\right)\\
	&=:\lambda_{W/W^\perp}(w)+\lambda_{W^\perp}(q)
      \end{align*}
      The subindices $W/W^\perp,W^\perp$ were chosen for the following reason:
      \begin{align*}
	\tilde{\lambda}_{W/W^\perp}(w)
	&=\langle\langle w_0,w\rangle\rangle\\
	&=\langle w_0,w\rangle+\left(\pi(w_0),\pi(w)\right)\\
	w_0\in\ker(\pi)\qquad
	&=\langle w_0,w\rangle\\
	&\qquad\Rightarrow \tilde{\lambda}_{W/W^\perp}:
	  W\rightarrow W/W^\perp\rightarrow \F_2\\
	\tilde{\lambda}_{W^\perp}(w)
	&=\langle\langle z_0,w\rangle\rangle\\
	&=\langle z_0,w\rangle+\left(\pi(z_0),\pi(w)\right)\\
	z_0\in W^\perp\qquad
	&=\left(\pi(z_0),\pi(w)\right)\\
	\pi\;projection\qquad
	&=\left(\pi(z_0),\pi(w)\right)\\
	&\qquad\Rightarrow \tilde{\lambda}_{W^\perp}:
	  W\stackrel{\pi}{\rightarrow} W^\perp\rightarrow \F_2
      \end{align*}
      The intuition behind the upcoming construction is the following: The
      indicator functions $\lambda_{W/W^\perp},\lambda_{W^\perp}$ on $\GG$
      define different graphs $\DD_{W/W^\perp},\DD_{W^\perp}$ extending $\GG$.
      \begin{itemize}
	\item The first extension can be realized by an element $\langle
	  w_0,-\rangle$, even though this is not generally true if the
	  symplectic form is degenerate. It can thus be realized by adding
	  $f(p):=w_0\in W$ as in the Extraspecial Extension Theorem	 
	  \nref{thm_ExtensionExtraspecial}. Note that there we added a
	  nullvector $z$ only to again yield a minimal symplectic root
	  system $V=W\oplus z\k$.
	\item  The second extension can be realized by an element $\left(
	  z_0,\pi(-)\right)$, even though it is not generally true that an
	  indicator function $\lambda$ factorizes over $\pi$. It can thus be
	  realized by adding as in the Nullspace Extension Theorem
	  \nref{thm_ExtensionNullspace}
	  {\bf either} $f(p)=0$ for $z_0=0$ (and adding a nullvector to achieve
	  a minimal symplectic root system) {\bf or} for $z_0\neq 0$ by
	  projecting to $\ker(\tilde{\lambda})$ and introducing a new basepair
	  $x,y$.
      \end{itemize}
      The indicator function $\lambda$ for $\DD$ we wish to finally achieve has
      been written above as the $\F_2$-sum of the indicator functions
      $\lambda_{W/W^\perp},\lambda_{W^\perp}$ and hence the neighbourhood of $p$
      in $\DD$ is constructed as a symmetric difference for 
      $\DD_{W/W^\perp},\DD_{W^\perp}$.\\

      We now carry out this proof idea: The precise definition of $V$ and hence
      the type of symplectic root system we construct depends crucially on
      $z_0$:\\

  \begin{enumerate}
    \item For $z_0=0$ and hence $\lambda_{W^\perp}=0$ we proceed as
      in the proof of Theorem
      \nref{thm_ExtensionExtraspecial}: 
      \begin{align*}
	V
	&:= W\oplus^\perp z\k\\
	f(p)
	&:=w_0+z\\
	f(q)
	&:=g(q)\qquad \forall q\in \GG
  \end{align*}
      Note that the vice-versa restriction to $\GG$ is equal to $G$.
      The pair $(f,V)$ defined this way satisfies the axioms of a symplectic
      root system of type $(n,k)=(n',k'+1)$ extending $G$ of type $(n',k')$,
      which we prove as follows:
      Obviously, the new decorations
      generate $V$ and the symplectic root system property for edges
      $qq'\in\GG$  hold already in $G$.\\

      For the new edges $pq$ adjacent to
      $p\in \DD$ the symplectic root system property is fulfilled as follows:
      \begin{align*}
	\langle f(p),f(q)\rangle
	&= \langle w_0+z,g(q)\rangle\\
	z\in V^\perp\qquad
	&=\langle w_0,g(q)\rangle\\
	w_0\in\ker(\pi)\qquad
	&=\langle w_0,g(q)\rangle+\left(w_0,g(q)\right)\\
	&=\langle\langle w_0,g(q)\rangle\rangle\\
	&=\lambda_{W/W^\perp}(q)\\
	\lambda_{W^\perp}=0\qquad 
	&=\lambda(q)
      \end{align*}
      Hence $\langle f(p),f(q)\rangle=0$ iff $pq$ non-adjacent in $\DD$.\\

 \item For $z_0\neq 0$ and hence $\tilde{\lambda}_{W^\perp}\neq 0$ we combine
      the previous approach using $w_0$ with the introduction of a new
      hyperbolic plane $H_{n+1}$ as in the proof of Theorem
      \nref{thm_ExtensionNullspace}: Choose an element $x\in W$ with
      $\tilde{\lambda}_{W^\perp}(x)=1_{\F_2}$; such an $x$ can be used to
      project $W\rightarrow \ker(\tilde{\lambda}_{W^\perp})$ by $w\mapsto
      w-\tilde{\lambda}_{W^\perp}(w)\cdot x$. Then define: 
      \begin{align*}
	V
	&:=\underbrace{\ker\left(\tilde{\lambda}_{W^\perp}\right)}_{V^\perp}
	  \oplus^\perp \left( x\k\oplus y\k\right)=W\oplus y\k
	\qquad \langle x,y\rangle:=1\\
	f(p)
	&:=w_0+y\\
	f(q)
	&:=g(q)\\
	&:=\underbrace{g(q)-\tilde{\lambda}_{W^\perp}\left(g(q)\right)\cdot x}
	  _{\in \ker\left(\tilde{\lambda}_{W^\perp}\right)}
	  +\tilde{\lambda}_{W^\perp}\left(g(q)\right)\cdot x
	    \qquad \forall q\in \GG
      \end{align*}
      Note that the vice-versa restriction to $\GG$ is equal to $G$.
      The pair $(f,V)$ defined this way satisfies the axioms of a symplectic
      root system of type $(n,k)=(n'+1,k'-1)$ extending $G$ of type
      $(n',k')$, which we prove as follows:\\

      The new decorations
      generate $V$ and the symplectic root system property for edges
      $qq'\in\GG$  hold already in $G$. For the new edges $pq$ adjacent to
      $p\in \DD$ the symplectic root system property is fulfilled as follows:
      \begin{align*}
	\langle f(p),f(q)\rangle
	&= \langle w_0+y,g(q) \rangle \\
	&=\langle w_0,g(q)\rangle
	+\langle y,
	  \underbrace{ g(q)-\tilde{\lambda}_{W^\perp}\left(g(q)\right)\cdot x}
	  _{\in \ker\left(\tilde{\lambda}_{W^\perp}\right)}
	  +\tilde{\lambda}_{W^\perp}\left(g(q)\right)\cdot x\rangle\\
	&=\tilde{\lambda}_{W/W^\perp}\left(g(q)\right)
	+\langle y,\tilde{\lambda}_{W^\perp}\left(g(q)\right)\cdot x\rangle\\
	&=\tilde{\lambda}_{W/W^\perp}\left(g(q)\right)
	+\tilde{\lambda}_{W^\perp}\left(g(q)\right)\\
	&={\lambda}_{W/W^\perp}\left(q\right)
	+{\lambda}_{W^\perp}\left(q\right)\\
	&=\lambda(q)
      \end{align*}
      Hence $\langle f(p),f(q)\rangle=0$ iff $pq$ non-adjacent in $\DD$.\\
  \end{enumerate}
\end{proof}

%
%
%

\subsection{Tool: Double-Extensions of Extraspecial Symplectic
Root Systems}\label{sec_doubleExtension}

Finally, we give a practical criterion to obtain the unique minimal symplectic
root system which
are double extension of extraspecial ones without having to use mixed
completions: Let $\DD$ be a graph, $p,q\in\DD$ vertices and $G=(g,W)$ a
{minimal extraspecial} symplectic root system of $\GG:=\DD-\{p,q\}$ (i.e. of
type
 $(n,0)$ for $2n=|\GG|$). We apply Theorem \nref{thm_Extension} twice: The
first extension yields an almost extraspecial extension $(n,1)$, the second
extension allows two cases:
\begin{itemize}
  \item a type $(n,2)$ minimal extension $F=(f,V)$ of $G$ to $\DD$.
  \item an extraspecial minimal extension $F=(f,V)$ of $G$ to $\DD$ 
    (=type $(n+1,0)$)
\end{itemize}
To determine the case and the precise new decorations $f(p),\;f(q)$ we now may
first apply the much easier Extraspecial Extension Theorem
\nref{thm_ExtensionExtraspecial}
to the extensions of $G$ to $\DD-p$ resp. $\DD-q$, yielding elements $w_p\in W$
resp. $w_q\in W$ for the $w_0$ in the proof of
Theorem \nref{thm_ExtensionExtraspecial}. These elements determine the full
extension as follows:

\begin{lemma}[Double-Extension of
Extraspecials]\label{lm_DoubleExtensionExtraspecial}
  Depending on $w_p,w_q$ as defined above the extension $F=(f,V)$ has the
following form
  \begin{center}
  \begin{tabular}{r|cc}
    & $pq\in Edges(\DD)$ & $pq\not\in Edges(\DD)$\\
    \hline
    $w_p\perp w_q$ & $(n+1,0)$ & $(n,2)$ \\
    $w_p\not\perp w_q$ & $(n,2)$ & $(n+1,0)$\\
  \end{tabular}
  \end{center}
  with the previous decorations $g(q)$ on $\GG$ and new decorations
  \begin{center}
  \begin{tabular}{l|lllr}
    Case & $V$ & $f(p)$ & $f(q)$ \\
    \hline
    $(n,2)$ & $W\oplus^\perp z_p\F_2 \oplus^\perp z_q\F_2$ 
      & $w_p+z_p$ & $w_q+z_q$ & \\
    $(n+1,0)$ & $W\oplus^\perp \left(x\F_2 \oplus y\F_2\right)$ 
      & $w_p+x$ & $w_q+y$ & with $\langle x,y\rangle:=1$\\
  \end{tabular}
  
  \end{center}
\end{lemma}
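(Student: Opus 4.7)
The plan is to exploit the uniqueness part of Theorem \nref{thm_Extension}: since we already know that a unique minimal extension $F=(f,V)$ of $G$ to $\DD$ exists, it suffices in each of the four cases of the table to write down an explicit candidate $(V,f(p),f(q))$ and verify that it is indeed a minimal symplectic root system with restriction $G$. By uniqueness, the candidate then is $F$, and reading off its type gives the first table.

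First I would recall what $w_p,w_q\in W$ mean. Applying the Extraspecial Extension Theorem \nref{thm_ExtensionExtraspecial} to the one-point extension $\GG\hookrightarrow\DD-q$, nondegeneracy of $W$ produces a unique $w_p\in W$ with $\langle w_p,g(r)\rangle=\lambda_p(r)$ for every $r\in\GG$, where $\lambda_p$ is the indicator of the $\GG$-neighbourhood of $p$; analogously for $w_q$ using $\lambda_q$. Note that the edge $pq$ plays no role in the definition of $w_p,w_q$, since $q\notin\GG$ and $p\notin\GG$.

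Next I would check, in each of the four boxes, that the proposed $V$ is a symplectic vector space of the claimed type (both $W\oplus^\perp z_p\F_2\oplus^\perp z_q\F_2$ and $W\oplus^\perp(x\F_2\oplus y\F_2)$ are obvious), and that $\{g(r)\}_{r\in\GG}\cup\{f(p),f(q)\}$ is a basis of $V$ (immediate, because the added summands $z_p,z_q$ resp.\ $x,y$ are linearly independent of $W$ and of each other, and $\{g(r)\}_{r\in\GG}$ is already a basis of $W$). It remains to verify the orthogonality/adjacency dictionary. For $r,r'\in\GG$ the condition is inherited from $G$. For $r\in\GG$ and the new node $p$ the defining property of $w_p$ gives
\[
\langle f(p),f(r)\rangle=\langle w_p+\epsilon,g(r)\rangle=\langle w_p,g(r)\rangle=\lambda_p(r),
\]
where $\epsilon\in\{z_p,x\}$ lies in $W^\perp$ inside $V$ relative to $g(r)\in W$; analogously for $q$ in place of $p$.

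The only non-trivial verification is the pairing $\langle f(p),f(q)\rangle$, which is where the two bilinear conditions and the edge $pq$ interlock:
\begin{align*}
\text{type }(n,2):\quad & \langle w_p+z_p,w_q+z_q\rangle=\langle w_p,w_q\rangle,\\
\text{type }(n{+}1,0):\quad & \langle w_p+x,w_q+y\rangle=\langle w_p,w_q\rangle+\langle x,y\rangle=\langle w_p,w_q\rangle+1.
\end{align*}
Requiring this to equal $1$ iff $pq\in\mathrm{Edges}(\DD)$ immediately forces the case-distribution displayed in the first table: the $(n,2)$-ansatz is consistent precisely when $\langle w_p,w_q\rangle=[pq\in E]$, i.e.\ in the two diagonal boxes $(w_p\perp w_q,\ pq\notin E)$ and $(w_p\not\perp w_q,\ pq\in E)$, while the $(n{+}1,0)$-ansatz is consistent precisely in the other two boxes. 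In each box exactly one of the two ans\"atze survives and gives a minimal symplectic root system extending $G$, which by uniqueness from Theorem \nref{thm_Extension} must be $F$. I expect the only mildly subtle step to be the bookkeeping that $z_p,z_q,x,y$ are all orthogonal to $W$, so that cross terms vanish and the pairing $\langle f(p),f(q)\rangle$ really reduces as above; everything else is direct.
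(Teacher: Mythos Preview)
Your proposal is correct and follows essentially the same route as the paper. Both arguments reduce to the single nontrivial check $\langle f(p),f(q)\rangle=\langle w_p,w_q\rangle+\langle\epsilon_p,\epsilon_q\rangle$ with $\epsilon_p,\epsilon_q$ the added basis vectors, and both conclude by uniqueness of the minimal extension; the only cosmetic difference is that the paper phrases the construction as gluing the two one-point almost-extraspecial extensions $F_p,F_q$ and then solving for $\langle z_p,z_q\rangle\in\{0,1\}$, whereas you try the two candidate types directly and see which one satisfies the edge condition.
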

\begin{proof}
  By the Extraspecial Extension Theorem \nref{thm_ExtensionExtraspecial} there
exist unique extensions  $F_p:=(F_p,z_p\F_2\oplus^\perp W)$ resp.
$F_q:=(F_q,z_q\F_2\oplus^\perp W)$ of  $G$ to $\DD_p:=\DD-p$ resp.
$\DD_q:=\DD-q$ with $f_p(q):=z_p+w_p$ resp.  $f_q(p):=z_q+w_q$ (and
$\forall_{r\in \GG}\;f_p(r)=f_q(r):=g(r)$) for   specificly constructed
$w_p,w_q\in W$.\\

  Hence we may aim to construct a symplectic root system $F=(f,V)$ for $\DD$
such that the
  restrictions to $\DD_p$ resp. $\DD_q$ are $F_p$ resp. $F_q$ (conversely
  we again see that every symplectic root system on $\DD$ has to be of this
form).\\

Note first, that any pair of vertices
except $p,q$ is contained
  in $\DD_p$ or $\DD_q$ and hence by construction the symplectic root system
condition has to be
  checked only for the particular pair $pq$. Here we calculate
  \begin{align*}
      \langle f(p),f(q)\rangle
      &=\langle z_q+w_q,z_p+w_p\rangle \\
      &=\langle z_q,z_p\rangle+\langle w_q,w_p\rangle
  \end{align*}
  because by construction $z_p,z_q\perp W$. The expression should be $0,1$
  depending on wheather $pq\in Edges(\DD)$. Hence depending on this and
  $w_p\perp w_q$, precisely one of the possibilities $\langle
  z_p,z_q\rangle=0,1$ turns $F$ into an symplectic root system of $\DD$.\\
\end{proof}

\section{Example: Symplectic Root Systems for ADE}\label{sec_CartanType}

The unique extension in Theorem \nref{thm_ExtensionExtraspecial} requires
the smaller symplectic root system to be extraspecial. To avoid the general
Theorem \nref{thm_Extension} we use in the following proof the double extension
of extraspecials in Lemma \nref{lm_DoubleExtensionExtraspecial}. We start with
an induction to construct unique minimal symplectic root systems
for all $A_{2n}$, that turn out to be all extraspecial (nullity $k=0$). Then we
add either one or two more vertices to achieve the other diagrams with higher
nullity.\\

Extending the extrapecial minimal $A_{2n}$ by one node by invoking
Theorem \nref{thm_ExtensionExtraspecial} does not require any further
consideration and immediately yields almost
extraspecial symplectic root systems (nullity $k=1$) on $A_{2n+1},D_{2n+1},E_7$.
On the other hand, when doubly extending an extraspecial $A_{2n-2}$ to
$A_{2n}$ (induction) and to $D_{2n},E_6,E_8$ using Lemma
\nref{lm_DoubleExtensionExtraspecial} one has to check by explicit calculation,
which case of this theorem applies: usually extrapecial and only for $D_{2n}$ of
type $(n,2)$.

\begin{theorem}\label{thm_Cartan}
  Each simply-laced root system in characteristic 0 of Cartan type 
  $A_n,D_{n\geq 4},E_{6,7,8}$ admits symplectic root systems precisely of the
  following types. Each symplectic root system in the lists exists and is unique
  up to isomorphism. Note that $D_{2n+2}$ has $3$ non-isomorphic
  $(n,1)$-quotients.\\

\begin{center}
\begin{tabular}{llllll}
  $A_{2n}$ & $(n,0)$ minimal 
  & $D_{2n+1}$ & $(n,1)$ minimal
  & $E_6$ & $(3,0)$ minimal\\
  $A_{2n+1}$ & $(n,1)$ minimal 
  & $D_{2n+1}$ & $(n,0)$ quotient
  & $E_7$ & $(3,1)$ minimal\\
  $A_{2n+1}$ & $(n,0)$ quotient
  & $D_{2n+2}$ & $(n,2)$ minimal
  & $E_7$ & $(3,0)$ quotient\\
    &
  & $D_{2n+2}$ & $(n,1)$ $3$ quotients
  & $E_8$ & $(4,0)$ minimal\\
    &
  & $D_{2n+2}$ & $(n,0)$ quotient
  & & \\
\end{tabular}
\end{center}
$\qquad$\newline

In the proof below, explicit decorations are constructed for each case.

\end{theorem}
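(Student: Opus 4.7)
The plan is to construct the unique minimal symplectic root system on each ADE diagram by iterating the extension results of Section \ref{sec_Extension}, and then to read off all other symplectic root systems as its quotients via Corollary \ref{corNEW_allSRS}. By Lemma \ref{lmNEW_Existence} each diagram admits such a unique minimal $F=(f,V)$, and every symplectic root system on that diagram arises as $F/U$ for some subspace $U\subset V^\perp$. Hence the task splits into (i) determining the type $(n,k)$ of the minimal $F$, and (ii) enumerating the quotients.

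For (i) I proceed inductively. The base case $A_2$ is the single hyperbolic plane of type $(1,0)$ from Example \ref{exm_A2}. For the induction step $A_{2n-2}\rightsquigarrow A_{2n}$, I adjoin two new vertices $p,q$ at one end of the chain with $pq\in E(\DD)$, only $p$ meeting the old chain; applying Lemma \ref{lm_DoubleExtensionExtraspecial}, the vector $w_q$ for the isolated end $q$ vanishes, so $w_p\perp w_q$ trivially and the lemma places us in the extraspecial case $(n,0)$. For $A_{2n+1}$, $D_{2n+1}$ and $E_7$, the diagram is a one-vertex extension of an already extraspecial one ($A_{2n}$ for the first two, $E_6$ for $E_7$), so Theorem \ref{thm_ExtensionExtraspecial} immediately delivers the claimed almost extraspecial type $(n,1)$ or $(3,1)$. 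For $E_6$ from $A_4$ and $E_8$ from $A_6$ I again invoke Lemma \ref{lm_DoubleExtensionExtraspecial}: in the alternating chain basis of $A_{2m}$ the vectors $w_p,w_q$ come out as distinct odd- versus even-indexed alternating sums, and a direct short computation gives $w_p\not\perp w_q$ while $pq\notin E$, landing us again in the extraspecial case. The exceptional situation is $D_{2n+2}$, where both new vertices share a common neighbor in $A_{2n}$; this forces $w_p=w_q$ and therefore $\langle w_p,w_q\rangle=\langle w_p,w_p\rangle=0$ by alternation, which combined with $pq\notin E$ puts us in the $(n,2)$ branch of Lemma \ref{lm_DoubleExtensionExtraspecial}.

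For (ii), the nullspace $V^\perp$ has $\F_2$-dimension equal to the nullity $k$. In the extraspecial cases $k=0$ ($A_{2n},E_6,E_8$) there are no proper quotients. In the almost extraspecial cases $k=1$ ($A_{2n+1},D_{2n+1},E_7$) the only proper quotient is by the whole nullspace and has type $(n,0)$ or $(3,0)$, matching the single extra row in each of these entries. For $D_{2n+2}$ the $\F_2$-plane $V^\perp$ has three distinct one-dimensional subspaces, giving three quotients of type $(n,1)$ plus one quotient of type $(n,0)$ by the full nullspace; non-isomorphism of the three $(n,1)$ quotients as symplectic root systems on the fixed graph is read off directly from the explicit decorations $f(p)=w_p+z_p,\;f(q)=w_q+z_q$ produced in the $(n,2)$ branch of the induction above.

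The main obstacle is the explicit combinatorial bookkeeping inside step (i): for each one- or two-vertex extension one must fix a symplectic basis of the previous $V$ compatible with the chain structure, compute the linear form $\tilde\lambda$ encoding the new neighborhood, solve for the associated $w_0$ (or the pair $w_p,w_q$), and verify which case of the relevant extension lemma applies. The $D_{2n+2}$ calculation is the subtle one because it is the only step in which the $(n,2)$ branch of Lemma \ref{lm_DoubleExtensionExtraspecial} is triggered. Beyond this, uniqueness and existence of each extension are free from Theorem \ref{thm_Extension}, so no further coherence checks are needed.
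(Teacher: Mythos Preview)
Your strategy is the paper's strategy: establish the extraspecial minimal symplectic root system on $A_{2n}$ inductively via Lemma~\ref{lm_DoubleExtensionExtraspecial}, then reach the remaining diagrams by one- or two-point extensions, and finally list quotients via Corollary~\ref{corNEW_allSRS}. Two harmless variations: you grow $A_{2n-2}\to A_{2n}$ by attaching both new vertices at \emph{one} end (using $w_q=0$), whereas the paper attaches one vertex at each end and checks $w_p=y_n\not\perp w_q=x_n$; and you obtain $E_7$ from $E_6$ rather than from $A_6$. Both routes land in the same case of Lemma~\ref{lm_DoubleExtensionExtraspecial} and give isomorphic results by uniqueness.

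Two places deserve more than a sentence. First, your description of $w_p,w_q$ for $E_6,E_8$ as ``odd- versus even-indexed alternating sums'' is too vague to verify $w_p\not\perp w_q$; the paper pins this down via an explicit basis (Lemma~\ref{lm_ws}) and finds $w_p=y_n$, $w_q=x_n+x_{n-1}$, whence $\langle w_p,w_q\rangle=1$. Second, the non-isomorphism of the three $(n,1)$-quotients of $D_{2n+2}$ is not quite ``read off'': you must use that a symplectic-root-system isomorphism fixes the graph pointwise, hence fixes all decorations on the $A_{2n}$-part and therefore restricts to the identity on $W$; from $w_p=w_q$ one then sees that the three quotients (by $z_p$, by $z_q$, by $z_p+z_q$) have, respectively, $f(p)\neq f(q)$ with $f(p)\in W$, $f(p)\neq f(q)$ with $f(q)\in W$, and $f(p)=f(q)$, which no map fixing $W$ can interchange. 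The paper carries out exactly this check.
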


\begin{remark}\label{rem_ActionOnCartan}
 Graph automorphisms preserve the minimal symplectic root systems and hence
induce a symplectic isomorphism on $V$. E.g. the flip of
$A_{2n}$ correspond to the symplectic involution $\forall_i\;x_i\leftrightarrow
y_i$. The same holds for unique quotients.\\

On the other hand, the $3$ non-isomorphic quotients of type $(n,1)$ for
$D_{2n+2}$ are permuted by the graph automorphisms as follows:
\begin{itemize}
 \item For $n\neq 0$ there is a single order-$2$ graph automorphism on
  $D_{2n+2}$
  interchanging two of the three quotients while preserving the third.
 \item The graph $D_4$ has the exceptional automorphism group $\S_3$, which
  permutes all three quotients.
\end{itemize}
\end{remark}

The proof will proceed in several steps. We first clarify $A_{2n}$ which is the
building block for all other diagrams:
\begin{lemma}\label{lm_symplectic root systemAn}
  $A_{2n}$ has a unique symplectic root system, which is minimal and
  extraspecial. It's precise form is in the natural linear node ordering:
  $$x_{n}+x_{n-1},\;y_{n-1}+y_{n-2},\;x_{n-2}+x_{n-3},\;\ldots,\;
    y_{n-2}+y_{n-3},\;x_{n-1}+x_{n-2},\;y_{n}+y_{n-1}$$
  where the midmost decorations are $\ldots y_2+y_1,\;x_1,\;y_1,\;x_2+x_1\ldots$
  or respectively $\ldots x_2+x_1,\;y_1,\;x_1,\;y_2+y_1\ldots$ depending on the
  genus of $n$ (note that $x,y$ may be switched by the obvious symplectic
  isomorphism)
\end{lemma}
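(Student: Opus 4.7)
The plan is to combine the general uniqueness result for minimal symplectic root systems (Corollary~\nref{cor_AlwaysHomomorphism}) with a direct verification of the stated explicit decoration. First I would set up the ambient symplectic space: let $V := \bigoplus_{i=1}^n H_i$ with basis $\{x_i, y_i\}_{i=1}^n$ and alternating form $\langle x_i, y_j\rangle := \delta_{ij}$, $\langle x_i, x_j\rangle = \langle y_i, y_j\rangle := 0$, which makes $V$ extraspecial of type $(n,0)$. I then define $f:\{1,\ldots,2n\}\to V$ by the formulas in the lemma; each decoration $f(k)$ is a sum of either one or two basis vectors, all of the same ``$xy$-type''.

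Next I would verify the symplectic root system axioms by inspection. For adjacent pairs, $f(k)$ and $f(k+1)$ always have opposite $xy$-type and their supports share exactly one basis index, yielding symplectic pairing $1_{\F_2}$. For non-adjacent pairs with $|k-l|\geq 2$, either the two decorations have the same $xy$-type (trivially pairing $0$), or they have opposite types and their supports share either $0$ indices (pairing $0$) or exactly $2$ indices, in which case the two contributions cancel as $1 + 1 \equiv 0 \pmod 2$. Minimality follows because the $2n$ vectors $f(1),\ldots,f(2n)$ are linearly independent: ordering the basis of $V$ as $x_n, y_n, x_{n-1}, y_{n-1}, \ldots$ makes the coefficient matrix upper-triangular with $1$'s on the diagonal. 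Existence and uniqueness of the minimal SRS are granted by Lemma~\nref{lmNEW_Existence} and Corollary~\nref{cor_AlwaysHomomorphism}; since $V$ is extraspecial with $V^\perp = 0$, every surjective symplectic homomorphism out of $V$ is bijective, so by Corollary~\nref{corNEW_allSRS} $A_{2n}$ admits no proper-quotient SRS and the minimal one is the \emph{unique} SRS.

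The only delicate step is the ``two shared indices'' cancellation for non-adjacent pairs of opposite type: one must check that whenever $f(k), f(l)$ share indices, they share them in the matched pattern $\{i, i-1\}$ on both sides, so that the two contributions $\langle x_i, y_i\rangle + \langle x_{i-1}, y_{i-1}\rangle = 1 + 1$ cancel over $\F_2$. This amounts to a finite case analysis on $|k - l|$ and on the side of the midmost pair $x_1, y_1$ (or $y_1, x_1$ depending on the parity of $n$). An alternative route is induction on $n$ using Lemma~\nref{lm_DoubleExtensionExtraspecial} applied to the two outer nodes $p, q$ of $A_{2n}$: identify the dual vectors $w_p = y_{n-1}$, $w_q = x_{n-1}$ in the inductive extraspecial $A_{2n-2}$-space, verify $w_p \not\perp w_q$ together with $pq \notin \mathrm{Edges}(A_{2n})$, and read off from the table that the extension is extraspecial of type $(n, 0)$ --- with the stated form recovered after the symplectic automorphism of $H_{n-1}$ swapping $x_{n-1} \leftrightarrow y_{n-1}$.
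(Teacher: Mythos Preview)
Your primary argument---direct verification of the explicit decoration plus the general uniqueness machinery (Corollary~\nref{cor_AlwaysHomomorphism}, Lemma~\nref{lmNEW_Existence}, Corollary~\nref{corNEW_allSRS})---is correct and is a genuinely different route from the paper's. The paper proceeds purely by induction on $n$: it takes the extraspecial minimal symplectic root system on $A_{2n}$, attaches two new end-nodes $p,q$, computes $w_p=y_n$ and $w_q=x_n$, checks $w_p\not\perp w_q$ with $pq\notin\mathrm{Edges}$, and reads off the extraspecial case of Lemma~\nref{lm_DoubleExtensionExtraspecial}. Your approach trades that induction for a one-shot combinatorial check, and then gets uniqueness of \emph{all} (not just minimal) symplectic root systems for free from $V^\perp=0$; the paper's approach instead dovetails with the extension machinery used immediately afterwards for $D_n$ and $E_{6,7,8}$.

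Two small points to tighten. First, your triangularity claim is not quite right as stated: with the rows in the linear node order $f(1),\ldots,f(2n)$ and columns $x_n,y_n,x_{n-1},y_{n-1},\ldots$, the matrix is \emph{not} upper-triangular (e.g.\ $f(2n)=y_n+y_{n-1}$ has a $1$ in column~$2$). You must also reorder the rows outside-in, as $f(1),f(2n),f(2n-1),f(2),f(3),\ldots$, so that each row's leading entry sits on the diagonal; alternatively just observe that $x_n,y_n,x_{n-1},y_{n-1},\ldots$ each occur as the unique ``highest'' term of exactly one $f(k)$, which suffices for linear independence. Second, in your alternative inductive route the final normalisation is not merely the swap $x_{n-1}\leftrightarrow y_{n-1}$ in $H_{n-1}$: the double extension produces $f(p)=y_{n-1}+(\text{new})$ and $f(q)=x_{n-1}+(\text{new})$, and matching the stated form requires the global involution $x_i\leftrightarrow y_i$ for \emph{all} $i$ (this is exactly what the paper does). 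Swapping only $H_{n-1}$ would spoil the inner decorations.
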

\begin{proof}
  Certainly for $n=1$ the unique symplectic root system $x_1,y_1$ of type
$(1,0)$ is minimal and extraspecial, see example \nref{exm_A2}.\\

  Now assume inductively that $A_{2n}$ has the unique extraspecial minimal
  symplectic  root system $G=(g,W)$ as asserted. We
  wish to invoke Lemma \nref{lm_DoubleExtensionExtraspecial} to extend this
  extraspecial minimal symplectic root system on $A_{2n}$ by new vertices $p$
  resp. $q$ attached to each end of $A_{2n+2}$. To prove that we indeed land in
  the extraspecial case 1 of the Lemma and to determine the precise new
  decoration, we need to first add one node on each end seperately with
  decorations $z_p+w_p$ resp. $z_q+w_q$.\\

  This means we have to calculate an
  actually unique $w_p$ (resp. $w_q$), such that $\langle w_p,g(r) \rangle=1$
  for the  leftmost node $r$ of  $A_{2n}$ and $\langle w_p,g(r') \rangle=0$ for
  all  other vertices. We indeed verify easily, that $w_p=y_{n}$ has this
  property:
  \begin{align*}
    \langle w_p,x_{n}+x_{n-1} \rangle
    &=\langle y_{n}, x_{n}+x_{n-1} \rangle =1\\
    \langle w_p,x_{i}+x_{i-1} \rangle
    &=0 \;\;\forall_{i<n}\\
    \langle w_p,y_{i}+y_{i-1} \rangle
    &=0 \;\;\forall_{i<n}\\
    \langle w_p,y_{n}+y_{n-1} \rangle
    &=\langle y_{n}, y_{n}+y_{n-1}\rangle =0
  \end{align*}
  for $n>1$ respectively for $n=1$:
  \begin{align*}
    \langle w_p,x_1 \rangle
    &=\langle y_{1}, x_1 \rangle =1\\
    \langle w_p,y_1 \rangle
    &=\langle y_{1}, y_1\rangle =0
  \end{align*}  

  By symmetry we analogously have $w_q=x_{n}$. Because $p,q$ are not connected
  and $w_p\not\perp w_q$, Lemma \nref{lm_DoubleExtensionExtraspecial}
  applies yielding case 1. Hence it indeed provides $V$ to be extraspecial with
  new symplectic base-pair $z_p,z_q$, which we from now on call
  $y_{n+1},x_{n+1}$, and the new decorations 
  $$f_{A_{2n+2}}(p)=z_p+w_p=y_{n+1}+y_{n} \qquad
    f_{A_{2n+2}}(q)=z_q+w_q=x_{n+1}+x_{n}$$
  which is as asserted (after a symplectic isomorphism switching all $x_i,y_i$).
\end{proof}

For later use, we also wish to calculate the necessary $w_0$ to attach a node
to certain vertice in $A_{2n}$ in:

\begin{lemma}\label{lm_ws}
  When applying Theorem \nref{thm_ExtensionExtraspecial} to the symplectic
  root system $G=(g,W)$ of $\GG=A_{2n}$ constructed in Lemma \nref{lm_symplectic
  root systemAn}, the following
  explicit elements $w_0\in W$ arrise, depending on the neighbourhood in $\GG$
  of the new to-be-added node $p$:
  \begin{itemize}
    \item $w_0=y_{n}$, for attaching $p$ to only the first node of $A_{2n}$.
    \item $w_0=x_{n}+x_{n-1}$, for attaching $p$ to only the second node of
      $A_{2n}$ ($n\geq 2$) 
    \item $w_0=y_{n}+y_{n-1}+y_{n-2}$, for attaching $p$ to only the third
      node of $A_{2n}$ ($n\geq3$).
  \end{itemize}
  (the excluded cases are as follows: for $n=1$ the second node is the first
  node from the right and for $n=2$ the  third node is the second node from the
  right)
\end{lemma}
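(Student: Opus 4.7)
The plan is to verify each case by direct computation, using the explicit decorations from Lemma \ref{lm_symplectic root systemAn} and the defining property of $w_0$ in Theorem \ref{thm_ExtensionExtraspecial}. Recall that in that theorem, for a given indicator function $\lambda$ of the neighbourhood of the new node $p$, the element $w_0 \in W$ is the unique vector with $\langle w_0, g(q)\rangle = \lambda(q)$ for all $q\in \GG$. Existence and uniqueness are guaranteed because $W$ is nondegenerate and the $g(q)$ form a basis. So the proof reduces to exhibiting, in each case, an element of $W$ whose symplectic pairings with the basis $\{g(q)\}_{q\in A_{2n}}$ equal the indicator function at the chosen neighbour and vanish on all other nodes.

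The verification exploits the structure of the decoration sequence along $A_{2n}$, which alternates between sums $x_i+x_{i-1}$ and $y_j+y_{j-1}$ with strictly decreasing indices from the outside in. Since the underlying symplectic form satisfies $\langle x_i,y_j\rangle=\delta_{ij}$ and $\langle x_i,x_j\rangle=\langle y_i,y_j\rangle=0$, a candidate $w_0$ made purely of $y$-vectors will only have nonzero pairings with $x$-decorations at the matching indices, and conversely. Concretely: for $w_0=y_n$, the only $x$-decoration containing $x_n$ is the first node $x_n+x_{n-1}$ (the last node $y_n+y_{n-1}$ is automatically orthogonal to $y_n$), which isolates the first node. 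For $w_0=x_n+x_{n-1}$, the $x_{n-1}$-component pairs with $y_{n-1}$ to select the second node $y_{n-1}+y_{n-2}$, while the pairing with the last node $y_n+y_{n-1}$ yields $1+1=0$ in $\F_2$, and self-pairing with the first node vanishes. For $w_0=y_n+y_{n-1}+y_{n-2}$, the $y_{n-2}$-component pairs with $x_{n-2}$ of the third node $x_{n-2}+x_{n-3}$ while the other summands cancel any contribution from the first node ($1+1=0$) and leave all other pairings zero.

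The main bookkeeping obstacle is the boundary behaviour at both ends of $A_{2n}$: one must confirm that the "tail" node on the far end, whose decoration shares the index $n$ with the vectors used in $w_0$, does not inadvertently pick up a nonzero pairing. This is exactly what forces the cancellations $\langle y_n,y_n+y_{n-1}\rangle=0$ and $\langle x_n+x_{n-1},y_n+y_{n-1}\rangle=1+1=0$, both of which happen in $\F_2$. These identities break down when $n$ is so small that the "first" and "last" nodes coincide or are adjacent to the target node, which is exactly the reason for excluding $n=1$ (second-node case) and $n\leq 2$ (third-node case) in the statement.

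Finally, one must also check pairings with the midmost nodes, whose decorations are the bare vectors $x_1$ and $y_1$ (rather than sums). Because the proposed $w_0$'s involve only $x_i,y_i$ with $i\geq n-2$, and because the midmost decorations $x_1,y_1$ lie in a disjoint set of indices once $n$ is large enough, these pairings vanish automatically. Uniqueness of the answer then follows from the uniqueness clause of Theorem \ref{thm_ExtensionExtraspecial}, and no further argument is required.
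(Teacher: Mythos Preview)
Your proposal is correct and takes essentially the same approach as the paper: direct verification that the candidate $w_0$ has the required symplectic pairings with each basis decoration $g(q)$, relying on the uniqueness clause from Theorem~\ref{thm_ExtensionExtraspecial}. The only noteworthy difference is that for the second bullet the paper avoids computation by observing that a node attached to the second vertex of $A_{2n}$ has exactly the same neighbourhood in $\GG$ as the first vertex itself, so $w_0$ must equal the decoration $g(\text{first node})=x_n+x_{n-1}$; your direct check of the pairings achieves the same end. One small caution: your final paragraph's claim that the indices of the midmost decorations $x_1,y_1$ are ``disjoint'' from those appearing in $w_0$ fails literally at the smallest admissible $n$ (e.g.\ for $n=3$ the third node \emph{is} the midmost $x_1$ and $w_0=y_3+y_2+y_1$ does contain $y_1$), though the conclusion still holds there since that pairing is precisely the one you want to be nonzero---you may wish to phrase that boundary case more carefully.
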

\begin{proof}
  The first claim (=attaching to the first node) has already been shown
  during the inductive proof of Lemma \nref{lm_symplectic root systemAn}.\\

  For the second claim (=attaching to the second node), note that the newly
  added $p$ has thus the same neighbourhood as the first node in $A_{2n}$,
  hence also the $w_0$ coincides with the decoration of this first node
  ($x_n+x_{n+1}$ resp. $x_1$ for $n=1$).\\

  For the third claim (=attaching to the third node) we directly calculate,
  that $w_0=y_{n}+y_{n-1}+y_{n-2}$ is  orthogonal on all node decorations
  except the third. Again, all vertices decorated only by $y_i$ or by
  $x_{i<n-2}$ have obviously orthogonal decoration):
\begin{align*}
  \langle y_{n}+y_{n-1}+y_{n-2},x_{n}+x_{n-1}\rangle
  &=\langle y_{n},x_{n} \rangle+\langle y_{n-1},x_{n-1}\rangle
  =0\\
  \langle y_{n}+y_{n-1}+y_{n-2},y_{n-1}+y_{n-2}\rangle
  &=0\\
  \langle y_{n}+y_{n-1}+y_{n-2},x_{n-2}+x_{n-3}\rangle
  &=\langle y_{n-1},x_{n-1}\rangle
  =1\\
  &\cdots\\
  \langle y_{n}+y_{n-1}+y_{n-2},y_{n-2}+y_{n-3}\rangle
  &=0\\
  \langle y_{n}+y_{n-1}+y_{n-2},x_{n-1}+x_{n-2}\rangle
  &=\langle y_{n-1},x_{n-1}\rangle+\langle y_{n-2},x_{n-2}\rangle
  =0\\
  \langle y_{n}+y_{n-1}+y_{n-2},y_{n}+y_{n-1}\rangle
  &=0
\end{align*}
\end{proof}

We may now continue with the cases $D_{2n+1}$ for $n\geq 2$ resp. $E_7$ which
only require to invoke Theorem \nref{thm_ExtensionExtraspecial} on the obvious
subgraph $A_{2n}$ resp. $A_6$ with unique extraspecial symplectic root system by
the preceeding lemma. The
$w_0$'s calculated above yield the precise decorations of the new node $p$
attached to the second resp. third node to be:
\begin{align*}
f_{D_{2n+1}}(p)&=z_1+x_{n}+x_{n-1}\\
f_{E_7}(p)&=z_1+y_{3}+y_{2}+y_{1}
\end{align*}
Finally, as in Lemma \nref{lm_symplectic root systemAn}, further calculations
are needed when doubly extending the unique minimal extraspecial symplectic
root systems on $A_{2n}$ to $D_{2n+2},E_6,E_8$ using
Lemma \nref{lm_DoubleExtensionExtraspecial}:\\

We first check that the extensions to $E_6$ resp. $E_8$ by attaching vertices
$p,q$ to the first and second node of the obvious subgraphs $A_4$ resp. $A_6$
again yield extraspecial symplectic root system. The respective $w_p,w_q$
were determined in Lemma
\nref{lm_ws} to be $y_{n},\;x_{n}+x_{n-1}$. As $w_p\not\perp w_q$ and $p,q$ not
connected we indeed again land in the extraspecial case 1 of the Lemma and
hence the newly added vertices receive the following decorations:\\

\begin{center}
\begin{tabular}{ll}
$f(p):=z_q+w_q=y_{n+1}+y_{n}$ & $\qquad f(q):=z_p+w_p=x_{n+1}+x_{n}+x_{n-1}$\\
$f_{E_6}(p):=y_{3}+y_{2}$ & $\qquad f_{E_6}(q):=x_{3}+x_{2}+x_{1}$\\
$f_{E_8}(p):=y_{4}+y_{3}$ & $\qquad f_{E_8}(q):=x_{4}+x_{3}+x_{2}$\\
\end{tabular}
\end{center}$\qquad$\newline 

Then we turn to extending $D_{2n+2}$ from the
obvious subgraph $A_{2n}$ by attaching two nodes $p,q$ both to the first
node. Again by Lemma \nref{lm_ws} we thus get $w_p=w_q=y_{n}$. Still
$p,q$ are disconnected, but this time $w_p\perp w_q$, hence we land in case
2 of Lemma \nref{lm_DoubleExtensionExtraspecial} yielding a type $(n,2)$
symplectic root system, where we denote the two newly added nullvectors
$z_1:=z_p$ and
$z_2:=z_q$ and hence yield decorations:\\

$$f_{D_{2n+2}}(p)=z_1+y_{n}\qquad f_{D_{2n+2}}(p)=z_2+y_{n}$$

We still need to check, that all three possible quotient symplectic
root systems of type $(n,1)$
are non-isomorphic. The decorations are respectively:\\

\begin{center}
\begin{tabular}{ll}
$f^{(1)}_{D_{2n+2}}(p)=z+y_{n}$ & $\qquad f^{(1)}_{D_{2n+2}}(q)=x_{n}$\\
$f^{(2)}_{D_{2n+2}}(p)=y_{n}$ & $\qquad f^{(2)}_{D_{2n+2}}(q)=z+x_{n}$\\
$f^{(3)}_{D_{2n+2}}(p)=z+y_{n}$ & $\qquad f^{(3)}_{D_{2n+2}}(q)=z+x_{n}$\\
\end{tabular}
\end{center}$\qquad$\newline 

An isomorphism $\phi$ intertwining $\phi\circ f^{(1)}=f^{(2)}$ needs to send
$\phi:z+y_{n}\leftrightarrow z+y_n$ and fix $z$. To keep all other decorations
$y_i+y_{i-1}$ we further conclude $\phi:z+y_{i}\leftrightarrow z+y_i$, which
contradicts the stability of the midmost decoration $y_1$.\\

An isomorphism $\phi$ intertwining $\phi\circ f^{(1)}=f^{(3)}$ (or
symmetrically $\phi\circ f^{(2)}=f^{(3)}$) needs to send different decorations
$y_n,z+y_n$ to the equal decorations $z+y_n,z+y_n$ which is impossible. Hence
the three quotients of type $(n,1)$ are mutually non-isomorphic. Moreover,
there is a unique quotient of type $(n,0)$ with decoration:\\

\begin{center}
\begin{tabular}{ll}
$f^{(0)}_{D_{2n+2}}(p)=y_{n}$ & $\qquad f^{(0)}_{D_{2n+2}}(q)=x_{n}$\\
\end{tabular}
\end{center}$\qquad$\newline

This concludes the proof of Theorem \nref{thm_Cartan}.

\section{Application}

\subsection{The Action of the Coxeter/Weyl-Group}\label{sec_Weyl}

The following notion can be found e.g. in \cite{Hum72} Section 9.2:

\begin{definition}\label{def_rootsystem}
  Let $E,(,)$ be a euclidean vector space over $\k=\R$. A subset of $\RR\subset
  E$ is called a (finite) root system, if the following axioms are satisfied:
  \begin{itemize}
    \item $\RR$ is finite, spans $E$ and $0\not\in \RR$
    \item If $\alpha\in\RR$, then the only multiples of $\alpha$ in $\RR$ are
      $\pm\alpha$.
    \item If $\alpha\in\RR$, then the reflections leaves $\RR$ invariant:
      $$\sigma_\alpha(\beta)
	:=\beta-\frac{2(\alpha,\beta)}{(\alpha,\alpha)}\alpha$$
    \item If $\alpha,\beta\in\RR$ then
      $\frac{2(\alpha,\beta)}{(\alpha,\alpha)}\in\Z$
  \end{itemize}
  Introduce $d_\alpha=\frac{1}{2}(\alpha,\alpha)$. As usual, we will in the
  following  assume $(,)$ to be norma\-lized such that $d_\alpha\in\Z$ and at
  least one $d_\alpha=1$. This implies in particular
  $$(\alpha,\beta)=d_\alpha \frac{2(\alpha,\beta)}{(\alpha,\alpha)}\in\Z$$
\end{definition}

In fact, every root system has a basis $\RR_0$ of simple roots: A basis
means here a basis of the vector space $E$, such that all roots $\alpha$
are integer linear combinations of $\RR_0$, with either all coefficients
positive or negative. The Cartan matrix of $\RR$ is defined as 
$$\forall_{\alpha\neq \beta\in\RR_0}\;\; C_{\alpha,\beta}
  :=\frac{2(\alpha,\beta)}{(\alpha,\alpha)}=d_\alpha^{-1}(\alpha,\beta)
  \qquad C_{\alpha,\alpha}:=2$$
Note that there is different convention regarding the order of the
Cartan matrix indices $\alpha,\beta$; the one above is custom in
quantum groups.\\

Frequently, the Cartan matrix is visualized as a Dynkin
diagram with nodes $\RR_0$ and edges drawn for $C_{\alpha,\beta}\neq 0$. The
Weyl group $\WW$ is the Coxeter group determined by the symmetrized Cartan
matrix $(,)$. The Weyl group $\WW$ is generated by reflections
$\sigma_\alpha,\alpha\in\RR$. Hence it acts on $E$ as isometries and fixes
$\RR$. The classification of finite Lie algebras implies $d_\alpha\in\{1,2,3\}$
with $d_\alpha=1$ for short roots.

\begin{theorem}
  Let $C$ be the Cartan matrix of a root system $\RR$, $\WW$ the
  corresponding Weyl group and $(,),d_\alpha$
  normalized as in Definition \nref{def_rootsystem}. Let  $\GG$ be the graph
  with nodes $\RR_0$ and edges drawn whenever $(\alpha,\beta)\equiv 1\mod 2$. 

  Suppose $F=(f,V)$ is a minimal symplectic root system for $\GG$.
  Then, the decoration $f:\RR_0\rightarrow V$ can be additively extended
  to a map $\tilde{f}:\RR\rightarrow V$. Furthermore we can define a natural
  action of $\WW$ on $V$ by symplectic isomorphisms, such that $\tilde{f}$
  intertwines the $\WW$-actions on $\RR,V$ and hence especially fixes
  $\tilde{f}(\RR)\subset V$.
\end{theorem}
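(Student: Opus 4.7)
Every root $\beta\in\RR$ admits a unique integer expansion $\beta=\sum_{\alpha\in\RR_0}n_\alpha^\beta\,\alpha$, so I set
$$\tilde f(\beta):=\sum_{\alpha\in\RR_0}\overline{n_\alpha^\beta}\,f(\alpha)\in V,$$
where the overline denotes reduction modulo $2$. Equivalently, $\tilde f$ is the restriction to $\RR$ of the unique $\F_2$-linear map $\Lambda\otimes_\Z\F_2\to V$ sending $\overline{\alpha}\mapsto f(\alpha)$, where $\Lambda:=\Z\RR_0$ is the root lattice. By minimality this map is an $\F_2$-linear isomorphism.

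For the Weyl action, observe that each $w\in\WW$ permutes $\RR$ and hence acts $\Z$-linearly on $\Lambda$. Reducing modulo $2$ and transporting along the above isomorphism yields an $\F_2$-linear map $\rho(w)\in GL(V)$, described on the basis by $\rho(w)f(\alpha)=\tilde f(w\alpha)$. Because $w_1w_2$ acts on $\Lambda$ as $w_1\circ w_2$, the assignment $w\mapsto\rho(w)$ is a group homomorphism $\WW\to GL(V)$. The equivariance $\rho(w)\circ\tilde f=\tilde f\circ w$ on all of $\RR$ is then built into the definition, since it is the very rule used to construct $\rho(w)$ on the basis and both $w$ on $\Lambda$ and $\tilde f$ are $\Z$-linear.

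To see $\rho(w)$ is a \emph{symplectic} automorphism, I would establish the key identity
$$\langle\tilde f(\gamma),\tilde f(\delta)\rangle\;=\;(\gamma,\delta)\bmod 2 \qquad\text{for all }\gamma,\delta\in\RR,$$
by bilinearly expanding both sides in the simple-root basis and using the defining property $\langle f(\alpha),f(\beta)\rangle\equiv(\alpha,\beta)\bmod 2$ of the graph $\GG$, together with the alternating condition $\langle f(\alpha),f(\alpha)\rangle=0=(\alpha,\alpha)\bmod 2$. Since $w\in\WW$ is a Euclidean isometry on $E$, the right-hand side is $w$-invariant, whence $\langle\rho(w)v_1,\rho(w)v_2\rangle=\langle v_1,v_2\rangle$ on basis decorations $v_i=f(\alpha_i)$, and hence on all of $V$ by bilinearity.

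The main delicate point I anticipate is not any single calculation but the mismatch between the integer reflection formula $\sigma_\alpha(\beta)=\beta-C_{\alpha,\beta}\alpha$ and the $\F_2$-symplectic form when the root system is not simply-laced: for $d_\alpha$ even, $\overline{C_{\alpha,\beta}}$ need not coincide with $\overline{(\alpha,\beta)}=\langle f(\alpha),f(\beta)\rangle$, so the appealing formula $\rho(\sigma_\alpha)(v)=v+\langle f(\alpha),v\rangle f(\alpha)$ (a symplectic transvection, which does give the correct action in the simply-laced case $d_\alpha=1$) cannot be the definition of $\rho$ in general. Defining $\rho$ globally through the $\Z$-linear action on $\Lambda$ instead of per generator bypasses this subtlety, and reduces the remaining verifications to formal bookkeeping modulo $2$.
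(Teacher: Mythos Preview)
Your proposal is correct and coincides in substance with the paper's proof: both define the Weyl action on $V$ by transporting the integral action of $\WW$ on the root lattice $\Lambda$ through the isomorphism $\Lambda\otimes_\Z\F_2\cong V$ coming from minimality, and both ultimately rely on the fact that $\langle f(\alpha),f(\beta)\rangle\equiv(\alpha,\beta)\bmod 2$ together with the $(,)$-invariance of $\WW$.

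The packaging differs slightly. The paper works one generator at a time, writing $\tilde\sigma_\alpha\bigl(f(\beta)\bigr)=f(\beta)-C_{\alpha,\beta}f(\alpha)$ and checking by a direct expansion that the two cross terms $C_{\alpha,\beta}\,d_\alpha C_{\alpha,\gamma}$ and $C_{\alpha,\gamma}\,d_\alpha C_{\alpha,\beta}$ cancel over $\F_2$. Your version isolates this as the single identity $\langle\tilde f(\gamma),\tilde f(\delta)\rangle\equiv(\gamma,\delta)\bmod 2$ (which uses exactly the same ingredients, including $(\alpha,\alpha)=2d_\alpha\equiv 0$), and then deduces symplecticity for \emph{every} $w\in\WW$ at once from the Euclidean isometry property. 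That framing is a bit cleaner and also makes the group-homomorphism property $\rho(w_1w_2)=\rho(w_1)\rho(w_2)$ immediate, whereas the paper's per-generator definition leaves the Coxeter relations implicit. Your remark that the symplectic-transvection formula $v\mapsto v+\langle f(\alpha),v\rangle f(\alpha)$ fails when $d_\alpha$ is even is well taken; the paper avoids this pitfall in exactly the same way you do, by using $C_{\alpha,\beta}$ rather than $\langle f(\alpha),f(\beta)\rangle$ in the defining formula.
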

Note: If $\RR$ is simply-laced, then $\GG$ is just the Dynkin diagram of $\RR$.
For $B_n$ it is the Dynkin diagram $A_1\times\cdots\times A_1$, for $C_n$ it is
$A_1\times A_{n-1}$, for $F_4$ it is $A_1\times A_1\times A_2$ and for $G_2$ it
is $A_2$.
\begin{proof}

  The proof is essentially the invariance of $(,)$ under the action
  of $\WW$:
  By the assumed minimality the image of the sets $\GG=\RR_0$ under $f$
  is a basis of $V$. Hence there exists for each simple root $\alpha\in\RR_0$ a
  unique map $\tilde{\sigma}_\alpha:V\rightarrow V$  such that this basis
  $\{f(\beta)\}_{\beta\in\RR_0}$ is mapped accordingly:
  \begin{align*}
   \tilde{\sigma}_\alpha\left(f(\alpha)\right)
    &:=\tilde{f}\left(\sigma_\alpha(\beta)\right)\\
    &=\tilde{f}\left(\beta-C_{\alpha,\beta}\alpha\right)\\
    &=f(\beta)-C_{\alpha,\beta}f(\alpha)
  \end{align*}
  Note that by additive extension, these formulae hold for non-simple roots
  $\alpha,\beta$ as well, but simple reflection of simple roots will suffice
  here. Note also, that especially for $C$ not simply-laced, $\tilde{f}$ does
  not need to be injective, as e.g. $\beta$ and $\beta-2\alpha$ are mapped to
  the same vectors in the vector space $V$.\\

  We yet have to check that the expressions for $\tilde{\sigma}_\alpha$ indeed
  define a symplectic isomorphism, i.e. preserves the symplectic form
  $\langle,\rangle$ on $V$, which we again check on the basis $f(\RR_0)$:
  \begin{align*}
    &\langle f\left(\tilde{\sigma}_\alpha(\beta)\right),
      f\left(\tilde{\sigma}_\alpha(\gamma)\right)\rangle\\
    &=\langle f(\beta)-C_{\alpha,\beta}f(\alpha),
      f(\gamma)-C_{\alpha,\gamma}f(\alpha)\rangle\\ 
    &=\langle f(\beta),f(\gamma)\rangle
    -C_{\alpha,\beta}\langle f(\alpha),f(\gamma)\rangle
    -C_{\alpha,\gamma}\langle f(\beta),f(\alpha)\rangle
    +\langle f(\alpha),f(\alpha)\rangle\\
    &=\langle f(\beta),f(\gamma)\rangle
    - C_{\alpha,\beta}\cdot d_\alpha C_{\alpha,\gamma}
    + C_{\alpha,\gamma}\cdot d_\alpha C_{\alpha,\beta}+0\\
    &=\langle f(\beta),f(\gamma)\rangle
  \end{align*}
  Here we calculated in $\F_2$ and used that by definition 
  $$\langle f(\alpha),f(\beta)\rangle=(\alpha,\beta)
  =d_\alpha C_{\alpha,\beta}=-d_\beta C_{\beta,\alpha}$$
\end{proof}

\subsection{Commutativity Graphs and Nichols
Algebras}\label{sec_CommutativityGraph}

In the application \cite{Len13} of symplectic root systems to Nichols algebras
over finite nilpotent groups $G$, a symplectic root system is used to determine
a generating set of elements with commutators prescribed by a fixed graph
$\GG$. Thereby the commutator map plays the role of the symplectic form. Note
that again we so far only consider commutators of order $2$.

\begin{definition}
  Let $\GG$ be a graph and $G$ a finite group. A decoration $f:\GG\rightarrow
  G$ is said to have $\GG$ as commutativity graph iff
  \begin{itemize}
    \item For $\alpha,\beta\in \GG$ the images $f(\alpha),f(\beta)$ commute
      iff $\alpha,\beta$ are non-adjacent.
    \item The images $f(\GG)$ are a generating system of $G$ 
  \end{itemize}
  We call such a decoration {minimal} iff the generating system $f(\GG)$ is
  minimal, i.e. no proper subset generates all of $G$.
\end{definition}

Suppose we are given a finite group with $[G,G]=\Z_2$. As
usual for $p$-groups we consider the skew-symmetric, isotropic {commutator map
$[,]$} (see \cite{Hup83}):
$$G\times G\stackrel{[,]}{\longrightarrow} [G,G]=\Z_p$$
$$g,h\mapsto [g,h]=ghg^{-1}h^{-1}$$
$$[h,g]=[g,h]^{-1} \quad [g,g]=1$$
Because $[G,G]$ is clearly central, the map is bimultiplicative
(the right-hand-side argument's works analogously):
\begin{align*}
  [g,h][g',h]
  &=(ghg^{-1}h^{-1})(g'hg'^{-1}h^{-1})\\
  &=g(g'hg'^{-1}h^{-1})hg^{-1}h^{-1}\\
  &=gg'hg'^{-1}g^{-1}h^{-1}\\
  &=[gg',h]
\end{align*}
Thus the commutator map factorizes to $V:=G/G^2\cong \F_2^n$: 
$$V\times V \stackrel{\langle,\rangle}{\longrightarrow} \F_2$$

\begin{example}
  Consider the extraspecial groups $G=2_\pm^{2\cdot n+1}$, which are central
  products of $n$ groups $G_i$ each isomorphic to 
  $$2_+^{2\cdot 1+1}=\D_4 \qquad 2_-^{2\cdot 1+1}=\Q_8$$  
  Central product means hereby, that $G_i^2=Z(G_i)=[G_i,G_i]\cong\Z_2$ of all
  factors are identified, especially $G^2=Z(G)=[G,G]\cong\Z_2$.\\

  Then $V=G/G^2=\times_i \Z_2\times\Z_2$ has a basis of elements
  $x_i,y_i$ and the commutator map corresponds to the standard symplectic form
  $\langle x_i,y_i\rangle=\F_2$. Especially extraspecial groups correspond to
  nondegenerate symplectic vector spaces $V,\langle,\rangle$.\\

  Similary,allowing an additional factor $G_0=\Z_4$, the resulting groups
  are known as almost-extraspecial $2_\pm^{2\cdot n+2}$ corresponding
  to a symplectic vector space of type $(n,1)$.
\end{example}

\begin{remark}
  In \cite{Len13} we start with a group-$2$-cocycle $u\in
  Z^2(G/[G,G],\Z_2)$  for to the central extension $G$, corresponding
  to the symplectic form as follows
  $$u(\bar{g},\bar{h})u^{-1}(\bar{h},\bar{g})
  =[g,h]=\langle\bar{g},\bar{h}\rangle$$
\end{remark}

We now show how symplectic root systems can be used to construct generating
sets of $G$ with prescribed commutativity graph $\GG$. In the application
\cite{Len13} the graph $\GG$ is a Dynkin diagram of a Nichols
algebra $\B(M)$ over the abelian group $\Gamma:=G/[G,G]$ and we
construct the covering Nichols algebra $\B(\tilde{M})$ over $G$. The use of a
symplectic root system thereby guarantees that the $G$-decorations in the
covering Nichols algebra commute iff nodes are disconnected, which is a
necessary condition for finite Nichols algebras, see \cite{HS10}, Prop. 8.1. \\

Note that the next Theorem is restricted to $G$ a group of order $2^n$, but
every group with $[G,G]=\Z_2$ can be written $G=G_{odd}\times G_2$, where
$G_{odd}$ is abelian and of odd order, while $G_2$ is a $2$-group.

\begin{theorem}
  Let $G$ be a 2-group with $[G,G]=\Z_2$ and define as above the following
  symplectic vector space:
  $$\left(V,\langle,\rangle\right):=\left(G/G^2,[,]\right)$$
  Let $\GG$ be a
  graph and $F=(f,V)$ a symplectic root system for $\GG$, then any lifts
  $g_\alpha\in G$ of the  decoration elements $\{f(\alpha)\}_{\alpha\in
  \GG}\subset V=G/G^2$ has $\GG$ as commutativity graph. Moreover, the
  lifts form a minimal generating system  of $G$ iff $F$ was a minimal
  symplectic root system.
\end{theorem}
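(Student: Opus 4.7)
The plan proceeds in four steps, largely unpacking the setup preceding the theorem statement.

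First, I observe that the commutator $[,]\colon G\times G\to[G,G]=\Z_2$ is bimultiplicative by centrality of $[G,G]$ (as recalled in the preamble), and every value lies in $\Z_2$, so $[g^2,h]=[g,h]^2=1$. Hence $[,]$ descends to an alternating bilinear form on $V=G/G^2$, which is the given symplectic form $\langle,\rangle$. For any lifts $g_\alpha,g_\beta\in G$ of $f(\alpha),f(\beta)\in V$, the commutator $[g_\alpha,g_\beta]=\langle f(\alpha),f(\beta)\rangle\in\Z_2$ is therefore independent of the chosen lifts. By the defining property of a symplectic root system this equals $1$ iff $\alpha,\beta$ are adjacent in $\GG$, proving the commutativity graph property.

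Second, to relate generation of $G$ to spanning of $V$, I establish the inclusion $[G,G]\subseteq G^2$. Using centrality of $[g,h]$ together with $[g,h]^{-1}=[g,h]$ in $\Z_2$ (so that $hg=[g,h]\,gh$), one computes
$$(gh)^2 = g(hg)h = g\cdot[g,h]\,gh\cdot h = [g,h]\,g^2 h^2,$$
and hence $[g,h]=(gh)^2\cdot g^{-2}\cdot h^{-2}\in G^2$. Therefore the Frattini subgroup $\Phi(G)=G^2[G,G]$ equals $G^2$, and $V=G/G^2=G/\Phi(G)$ is the Frattini quotient.

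Third, by the Burnside basis theorem for $2$-groups, a subset $S\subseteq G$ generates $G$ iff its image spans $G/\Phi(G)=V$ over $\F_2$, and $S$ is a minimal generating set iff its image is an $\F_2$-basis of $V$. Since the symplectic root system axioms require $\{f(\alpha)\}_{\alpha\in\GG}$ to generate $V$, any lifts $\{g_\alpha\}$ generate $G$; and $\{g_\alpha\}$ is a minimal generating set of $G$ iff $\{f(\alpha)\}$ is an $\F_2$-basis of $V$, i.e., iff $F$ is a minimal symplectic root system.

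The main obstacle is the second step, the inclusion $[G,G]\subseteq G^2$, which allows the identification of $G/G^2$ with the Frattini quotient. Once this is in place, the reduction of generation questions in the $2$-group $G$ to $\F_2$-linear algebra in $V$ via Burnside's theorem is standard, and the commutativity graph condition is a direct restatement of the symplectic root system axiom on the form $\langle,\rangle$.
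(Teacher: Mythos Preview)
Your proof is correct and follows essentially the same approach as the paper: verify the commutativity-graph condition directly from the definition of the symplectic form as the induced commutator map, then invoke the Burnside basis theorem to transfer (minimal) generation of $G$ to (basis) spanning of $V$. The only difference is that you explicitly verify $[G,G]\subseteq G^2$ (so that $G/G^2=G/\Phi(G)$), while the paper simply quotes Burnside's theorem in a form already stated for $V=G/G^2$; your extra step is a useful justification but not a new idea.
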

\begin{proof}
    The main assertion follows almost by definition: For any $\alpha,\beta\in
    \GG$ we show that the lifted $G$-decoration $\alpha\mapsto g_\alpha$
    commute iff the $V$-decorations are symplectically orthogonal, which
    happens by the symplectic root system property iff $\alpha,\beta$ are
    non-adjacent in $\GG$:
    \begin{align*}
	[g_\alpha,g_\beta]
	&=\langle \overline{g_\alpha},\overline{g_\beta}\rangle\\
	&=\langle f(\alpha),f(\beta) \rangle
    \end{align*}
    We yet have to prove that the lifted decorations
    $\{g_\alpha\}_{\alpha\in\GG}$ indeed generate $G$, as the
    symplectic root system decorations $\{f(\alpha)\}_{\alpha\in\GG}$ generate
    $V$ and especially a minimally $G$-generating set corresponds
    to a basis of $V$. This is the content of the following much more
    general theorem for $p$-groups and $V=G/\Phi(G)$:
    \begin{theorem}[Burnside Basis Theorem \cite{Hup83}
      Satz III.3.15 (p. 273)]\label{thm_Burnside}
      Every minimally generating set of a 2-group $G$ (no element may be
      omitted) $g_1,\ldots g_n$ consists precisely of $n=\dim_{\F_2}(V)$
      elements for $V:=G/G^2$, whose images in $V$ form a basis. 
    \end{theorem}
\end{proof}

\printbibliography

\end{document}